\providecommand{\tabularnewline}{\\}
\numberwithin{equation}{section}
\numberwithin{figure}{section}
\numberwithin{table}{section}
\theoremstyle{plain}
\newtheorem{thm}{\protect\theoremname}[section]
\theoremstyle{plain}
\newtheorem{fact}[thm]{\protect\factname}
\theoremstyle{definition}
\newtheorem{problem}[thm]{\protect\problemname}
\theoremstyle{remark}
\newtheorem{rem}[thm]{\protect\remarkname}
\theoremstyle{plain}
\newtheorem{cor}[thm]{\protect\corollaryname}
\newlist{casenv}{enumerate}{4}
\setlist[casenv]{leftmargin=*,align=left,widest={iiii}}
\setlist[casenv,1]{label={{\itshape\ \casename} \arabic*.},ref=\arabic*}
\setlist[casenv,2]{label={{\itshape\ \casename} \roman*.},ref=\roman*}
\setlist[casenv,3]{label={{\itshape\ \casename\ \alph*.}},ref=\alph*}
\setlist[casenv,4]{label={{\itshape\ \casename} \arabic*.},ref=\arabic*}
\newcommand{\amp}{&}
\mathchardef\mhyphen="2D
\newcommand{\INT}{\mathbf{INT}}
\newcommand{\CL}{\mathbf{CL}}
\newcommand{\GD}{\mathbf{GD}}
\newcommand{\LJ}{\mathbf{LJ}}
\newcommand{\NJ}{\mathbf{NJ}}
\newcommand{\HLK}{\mathbf{HLK}}
\newcommand{\HLJ}{\mathbf{HLJ}}
\newcommand{\ACD}{\mathbf{CD}}
\newcommand{\LIN}{\mathbf{LIN}}
\newcommand{\WLEM}{\mathbf{WLEM}}
\newcommand{\GDM}{\mathbf{GDM}}
\newcommand{\rs}{\mathrm{rs}}
\newcommand{\ls}{\mathrm{ls}}
\newcommand{\com}{\mathrm{com}}
\newcommand{\To}{\mathbin{\Rightarrow}}
\providecommand{\casename}{Case}
\providecommand{\corollaryname}{Corollary}
\providecommand{\factname}{Fact}
\providecommand{\problemname}{Problem}
\providecommand{\remarkname}{Remark}
\providecommand{\theoremname}{Theorem}
\begin{document}
\title{Hypersequent Calculi for Intermediate Predicate Logics}
\thanks{This work was supported by the Research Institute for Mathematical
Sciences, an International Joint Usage/Research Center located in
Kyoto University.}
\author{Takuma Imamura}
\address{Research Institute for Mathematical Sciences\\
Kyoto University\\
Kitashirakawa Oiwake-cho, Sakyo-ku, Kyoto 606-8502, JAPAN}
\email{\href{mailto:timamura@kurims.kyoto-u.ac.jp}{timamura@kurims.kyoto-u.ac.jp}}
\thanks{The first author was supported by the Morikazu Ishihara (Shikata)
Research Encouragement Fund and by JST ERATO HASUO Metamathematics
for Systems Design Project (No. JPMJER1603).}
\author{Shuya Matsumoto}
\address{Department of Mathematics, Faculty of Science and Technology\\
Keio University\\
3-14-1, Hiyoshi, Kohoku-ku, Yokohama-shi, Kanagawa 223-8522, JAPAN}
\email{\href{mailto:syuyamatsumoto@keio.jp}{syuyamatsumoto@keio.jp}}
\author{Shin Quawai}
\address{Independent Researcher\\
Room 602, Wisteria Shirakawa, 15, Ichijoji Sagarimatsu-cho, Sakyo-ku,
Kyoto 606-8152, JAPAN}
\email{\href{mailto:quawai@me.com}{quawai@me.com} (Corresponding Author)}
\begin{abstract}
We report on the current status of our on-going project to develop
well-behaved hypersequent calculi for intermediate predicate logics,
such as the linearity axiom $\LIN\colon\left(\varphi\to\psi\right)\lor\left(\psi\to\varphi\right)$
and the constant domain axiom $\ACD\colon\forall x\left(\varphi\lor\psi\left(x\right)\right)\to\varphi\lor\forall x\psi\left(x\right)$.
\end{abstract}

\subjclass[2020]{03B55, 03F03 (Primary), 68Q85 (Secondary)}
\maketitle

\section{\label{sec:Introduction}Introduction}

Gentzen-style sequent calculus is a proof system for sequents $\Gamma\To\Delta$,
where $\Gamma$ and $\Delta$ are finite sequences of formulae. Since
the arrow symbol $\To$ behaves as meta-implication, implicational
axioms can be well transformed to inference rules. For example, the
$\land$-introduction axiom (schema) $\varphi\to\psi\to\varphi\land\psi$
can be reformulated as the following rule:
\[
\vcenter{\infer[{\rm (\land\mhyphen R)}]{\Gamma\To\Delta,\varphi\land\psi}{\Gamma\To\Delta,\varphi\amp\Gamma\To\Delta,\psi}}.
\]
On the other hand, sequent calculus does not well manipulate axioms
whose outermost logical symbols are not implications such as the linearity
axiom $\LIN\colon\left(\varphi\to\psi\right)\lor\left(\psi\to\varphi\right)$, the weak law of excluded middle $\WLEM\colon\neg\varphi\vee\neg\neg\varphi$, and the constant domain axiom $\ACD\colon\forall x\left(\varphi\lor\psi\left(x\right)\right)\to\varphi\lor\forall x\psi\left(x\right)$. Note that these axioms care about disjunctions (and universal quantifiers). See e.g. \citet{Kas07} for some fundamental
problems concerning $\ACD$.

Hypersequent calculus was first introduced by \citet{Avr87}. A \emph{hypersequent}
is a finite sequence $\left(\Gamma_{i}\To\varphi_{i}\right)_{i=1}^{n}$
of sequents, and is usually denoted as follows:
\[
\Gamma_{1}\To\Delta_{1}\mid\Gamma_{2}\To\Delta_{2}\mid\cdots\mid\Gamma_{n}\To\Delta_{n}.
\]
The sequents $\Gamma_{i}\To\Delta_{i}$ are called \emph{components}
of the hypersequent. Throughout this paper, we denote hypersequents
by meta-symbols $G,H,\ldots$, sequents by $S,T,\ldots$, and formulae
by $\varphi,\psi,\ldots$; the concatenation of (possibly empty) hypersequents
$G$ and $H$ by $G\mid H$.

The hypersequent calculus $\HLK$ of classical propositional logic
($\CL$) is given by the inference rules listed in \prettyref{tab:HLK-rules}.
\begin{table}
\centering%
\begin{tabular}{cc}
\noalign{\vskip2mm}
\multicolumn{2}{l}{Axioms}\tabularnewline[2mm]
\noalign{\vskip2mm}
$\infer[\left(\mathrm{Id}\right)]{\varphi\To\varphi}{}$ & $\infer[\left(\mathrm{Bot}\right)]{\bot\To\varphi}{}$\tabularnewline[2mm]
\noalign{\vskip2mm}
 & \tabularnewline[2mm]
\noalign{\vskip2mm}
\multicolumn{2}{l}{External structural rules}\tabularnewline[2mm]
\noalign{\vskip2mm}
\multicolumn{2}{c}{$\infer[\left(\mathrm{ew}\right)]{S\mid G}{G}$}\tabularnewline[2mm]
\noalign{\vskip2mm}
\multicolumn{2}{c}{$\infer[\left(\mathrm{ec}\right)]{S\mid G}{S\mid S\mid G}$}\tabularnewline[2mm]
\noalign{\vskip2mm}
\multicolumn{2}{c}{$\infer[\left(\mathrm{ee}\right)]{G\mid T\mid S\mid H}{G\mid S\mid T\mid H}$}\tabularnewline[2mm]
\noalign{\vskip2mm}
 & \tabularnewline[2mm]
\noalign{\vskip2mm}
\multicolumn{2}{l}{Internal structural rules}\tabularnewline[2mm]
\noalign{\vskip2mm}
$\infer[\left(\mathrm{iw\mhyphen L}\right)]{\varphi,\Gamma\To\Delta\mid G}{\Gamma\To\Delta\mid G}$ & $\infer[\left(\mathrm{iw\mhyphen R}\right)]{\Gamma\To\Delta,\psi\mid G}{\Gamma\To\Delta\mid G}$\tabularnewline[2mm]
\noalign{\vskip2mm}
$\infer[\left(\mathrm{ic\mhyphen L}\right)]{\varphi,\Gamma\To\Delta\mid G}{\varphi,\varphi,\Gamma\To\Delta\mid G}$ & $\infer[\left(\mathrm{ic\mhyphen R}\right)]{\Gamma\To\Delta,\psi\mid G}{\Gamma\To\Delta,\psi,\psi\mid G}$\tabularnewline[2mm]
\noalign{\vskip2mm}
$\infer[\left(\mathrm{ie\mhyphen L}\right)]{\Gamma_{1},\psi,\psi,\Gamma_{2}\To\Delta\mid G}{\Gamma_{1},\varphi,\psi,\Gamma_{2}\To\Delta\mid G}$ & $\infer[\left(\mathrm{ie\mhyphen R}\right)]{\Gamma\To\Delta_{1},\psi,\varphi,\Delta_{2}\mid G}{\Gamma\To\Delta_{1},\varphi,\psi,\Delta_{2}\mid G}$\tabularnewline[2mm]
\noalign{\vskip2mm}
 & \tabularnewline[2mm]
\noalign{\vskip2mm}
\multicolumn{2}{l}{Cut}\tabularnewline[2mm]
\noalign{\vskip2mm}
\multicolumn{2}{c}{$\infer[\left(\mathrm{cut}\right)]{\Gamma_{0},\Gamma_{1}\To\Delta_{0},\Delta_{1}\mid G}{\Gamma_{0}\To\Delta_{0},\delta\mid G\amp\delta,\Gamma_{1}\To\Delta_{1}\mid G}$}\tabularnewline[2mm]
\noalign{\vskip2mm}
 & \tabularnewline[2mm]
\noalign{\vskip2mm}
\multicolumn{2}{l}{Logical rules}\tabularnewline[2mm]
\noalign{\vskip2mm}
$\infer[\left(\mathrm{\land_{i}\mhyphen L}\right)]{\varphi_{1}\land\varphi_{2}\To\Delta\mid G}{\varphi_{i},\Gamma\To\Delta\mid G}$ & $\infer[\left(\mathrm{\land\mhyphen R}\right)]{\Gamma\To\Delta,\varphi_{1}\land\varphi_{2}\mid G}{\Gamma\To\Delta,\varphi_{1}\mid G\amp\Gamma\To\Delta,\varphi_{2}\mid G}$\tabularnewline[2mm]
\noalign{\vskip2mm}
$\infer[\left(\mathrm{\lor\mhyphen L}\right)]{\varphi_{1}\lor\varphi_{2},\Gamma\To\Delta\mid G}{\varphi_{1},\Gamma\To\Delta\mid G\amp\varphi_{2},\Gamma\To\Delta\mid G}$ & $\infer[\left(\mathrm{\lor_{i}\mhyphen R}\right)]{\Gamma\To\Delta,\varphi_{1}\lor\varphi_{2}\mid G}{\Gamma\To\Delta,\varphi_{i}\mid G}$\tabularnewline[2mm]
\noalign{\vskip2mm}
$\infer[\left(\mathrm{\to\mhyphen L}\right)]{\varphi\to\psi,\Gamma\To\Delta\mid G}{\Gamma\To\Delta,\varphi\mid G\amp\psi,\Gamma\To\Delta\mid G}$ & $\infer[\left(\mathrm{\to\mhyphen R}\right)]{\Gamma\To\Delta,\varphi\to\psi\mid G}{\varphi,\Gamma\To\Delta,\psi\mid G}$\tabularnewline[2mm]
\end{tabular}\caption{\label{tab:HLK-rules}}
\end{table}
 
\begin{fact}
\begin{enumerate}
\item If $\HLK\vdash\Gamma_{1}\To\Delta_{1}\mid\cdots\mid\Gamma_{n}\To\Delta_{n}$,
then $\bigvee_{i=1}^{n}\left(\bigwedge\Gamma_{i}\to\bigvee\Delta_{i}\right)$
is $\CL$-valid.
\item If $\varphi$ is $\CL$-valid, then $\HLK\vdash\To\varphi$.
\end{enumerate}
\end{fact}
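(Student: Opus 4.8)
The two parts together assert soundness and (weak) completeness of $\HLK$ with respect to $\CL$, so the plan is to read the displayed formula as a \emph{formula interpretation} of hypersequents. For $G=\left(\Gamma_{i}\To\Delta_{i}\right)_{i=1}^{n}$ set
\[
\iota\left(G\right)\;:=\;\bigvee_{i=1}^{n}\left(\bigwedge\Gamma_{i}\to\bigvee\Delta_{i}\right),
\]
with the conventions $\bigwedge\varnothing=\top$, $\bigvee\varnothing=\bot$, and $\iota$ of the empty hypersequent equal to $\bot$. Part~(1) is then the claim that $\iota\left(G\right)$ is $\CL$-valid whenever $\HLK\vdash G$, and part~(2) is the special case of the converse for $G=\left({\To}\varphi\right)$.

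For part~(1) I would induct on the height of the $\HLK$-derivation of $G$. The two base cases are immediate, since $\iota\left(\varphi\To\varphi\right)=\varphi\to\varphi$ and $\iota\left(\bot\To\varphi\right)=\bot\to\varphi$ are tautologies. For the inductive step it suffices to verify, rule by rule, that the conjunction of the interpretations of the premises classically entails the interpretation of the conclusion. Because every rule carries a side hypersequent $G$ whose interpretation occurs as a common extra disjunct, each verification collapses to a quantifier-free propositional tautology: the external rules $\left(\mathrm{ew}\right)$, $\left(\mathrm{ec}\right)$, $\left(\mathrm{ee}\right)$ correspond to $\lor$-weakening $q\to p\lor q$, $\lor$-idempotence $p\lor p\to p$, and $\lor$-commutativity; the internal structural rules correspond to the analogous $\land$/$\lor$-laws inside a single implication; and each logical rule, e.g. $\left(\to\mhyphen\mathrm{L}\right)$, $\left(\lor\mhyphen\mathrm{L}\right)$, $\left(\land\mhyphen\mathrm{R}\right)$, is the evident implicational tautology relating the principal formula to its immediate subformulae.

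I expect $\left(\mathrm{cut}\right)$ to be the one step needing genuine care, as it is the only place where the two premises interact rather than being transformed termwise. From the validity of $\iota\left(\Gamma_{0}\To\Delta_{0},\delta\mid G\right)$ and of $\iota\left(\delta,\Gamma_{1}\To\Delta_{1}\mid G\right)$ one must deduce that of $\iota\left(\Gamma_{0},\Gamma_{1}\To\Delta_{0},\Delta_{1}\mid G\right)$. Fixing a valuation, the disjunct $\iota\left(G\right)$ is absorbed (if it is true we are done), after which a short case split on the truth value of the cut formula $\delta$ closes the argument: if $\bigvee\Delta_{0}$ holds we use the first premise, and if $\delta$ holds we feed it into the second. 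This is routine but is where I would be most careful.

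For part~(2) the cleanest route is to observe that $\HLK$ contains the ordinary multiple-succedent sequent calculus $\LK$ as its single-component fragment: specialising every rule of \prettyref{tab:HLK-rules} to an empty side hypersequent $G$ yields exactly the rules of $\LK$, the three external rules becoming vacuous. Hence every $\LK$-derivation is in particular an $\HLK$-derivation, and since $\LK$ is semantically complete for $\CL$—so that $\CL$-validity of $\varphi$ gives $\LK\vdash{\To}\varphi$, by the usual Kalmár/Lindenbaum argument—we obtain $\HLK\vdash{\To}\varphi$ at once. If a self-contained argument internal to $\HLK$ is preferred, the same conclusion follows by checking that $\HLK$ derives a complete Hilbert-style axiom set for $\CL$ and is closed under modus ponens, the latter being an instance of $\left(\mathrm{cut}\right)$; there the only mild subtlety is invoking the internal weakening and contraction rules to reuse a previously derived implication, which is exactly what those structural rules are for.
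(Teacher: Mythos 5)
The paper offers no proof of this Fact at all: it is quoted as known background on hypersequent calculi (due to Avron), and the text moves straight on to the intuitionistic systems. So there is no ``paper proof'' to compare against; what you have done is supply the standard argument, and it is correct. Your part~(1) is the usual soundness induction under the formula interpretation $\iota$, and your treatment of the only genuinely interactive rule, $\left(\mathrm{cut}\right)$ --- absorb the side disjunct $\iota\left(G\right)$, then split on whether the first premise yields $\bigvee\Delta_{0}$ or the cut formula $\delta$ --- is exactly right; all other rules reduce, as you say, to termwise propositional tautologies. Your part~(2) via the observation that single-component $\HLK$-derivations are precisely $\LK$-derivations (the external rules never firing, since the empty hypersequent is underivable) is also correct and is the argument one would expect: hypersequent completeness is inherited for free from sequent-calculus completeness, which is the whole point of stating the result as a Fact. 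The only cosmetic remark is that your conventions $\bigwedge\varnothing=\top$, $\bigvee\varnothing=\bot$ and $\iota\left(\text{empty hypersequent}\right)=\bot$ are needed for the induction to be stated uniformly, and you were right to fix them explicitly, even though the empty hypersequent case never actually arises in a derivation.
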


We can obtain the hypersequent calculus for intuitionistic propositional
logic ($\INT$) by mimicking Gentzen's $\LJ$ or Maehara's $\LJ'$.
More precisely, $\HLJ$ is the subsystem of $\HLK$, where sequents
are restricted to \emph{single-conclusion}; and $\HLJ'$ is the subsystem
of $\HLK$, where the rule $\left(\mathrm{\to\mhyphen R}\right)$
is restricted to
\[
\vcenter{\infer[\left(\mathrm{\to\mhyphen R'}\right)]{\Gamma\To\varphi\to\psi\mid G}{\varphi,\Gamma\To\psi\mid G}}.
\]

\begin{fact}
\begin{enumerate}
\item If $\HLJ\vdash\Gamma_{1}\To\varphi_{1}\mid\cdots\mid\Gamma_{n}\To\varphi_{n}$,
then $\bigvee_{i=1}^{n}\left(\bigwedge\Gamma_{i}\to\varphi_{i}\right)$
is $\INT$-valid.
\item If $\HLJ'\vdash\Gamma_{1}\To\Delta_{1}\mid\cdots\mid\Gamma_{n}\To\Delta_{n}$,
then $\bigvee_{i=1}^{n}\left(\bigwedge\Gamma_{i}\to\bigvee\Delta_{i}\right)$
is $\INT$-valid.
\item If $\varphi$ is $\INT$-valid, then $\HLJ\vdash\To\varphi$ and $\HLJ'\vdash\To\varphi$.
\end{enumerate}
\end{fact}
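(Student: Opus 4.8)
The plan is to prove the soundness statements (1) and (2) by induction on derivation height, and the completeness statement (3) by embedding ordinary sequent derivations. For soundness, introduce the formula interpretation $\iota\left(\Gamma\To\Delta\right)=\bigwedge\Gamma\to\bigvee\Delta$ of a component (with the usual conventions for empty conjunctions and disjunctions, the latter read as $\bot$) and $\iota\left(S_{1}\mid\cdots\mid S_{n}\right)=\bigvee_{i}\iota\left(S_{i}\right)$ of a hypersequent; this specialises to the formulae appearing in (1) and (2). I would then show that $\HLJ\vdash G$ implies $\INT\vdash\iota\left(G\right)$, and likewise for $\HLJ'$. The axioms $\left(\mathrm{Id}\right)$ and $\left(\mathrm{Bot}\right)$ reduce to the $\INT$-theorems $\varphi\to\varphi$ and $\bot\to\varphi$. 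For the inductive step the guiding observation is that every rule modifies one or two distinguished components while the side hypersequent $G$ is merely carried along as a disjunct; since $\INT$ proves monotonicity $\left(A\to B\right)\to\left(C\lor A\to C\lor B\right)$ together with the distributive law $\left(C\lor A\right)\land\left(C\lor B\right)\to C\lor\left(A\land B\right)$, it suffices to verify each rule with $G$ suppressed, i.e. to check that the interpretation of the active part of the conclusion is $\INT$-derivable from those of the active parts of the premises.

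With $G$ suppressed the external structural rules become the $\INT$-valid idempotence, commutativity and weakening laws for $\lor$; the internal structural rules become the analogous laws for $\land$ in the antecedent and $\lor$ in the succedent; and the rules for $\land$, $\lor$ and $\left(\mathrm{\to\mhyphen L}\right)$ are routine intuitionistic propositional manipulations. Two cases deserve attention. For $\left(\mathrm{cut}\right)$, from the interpretations of $\Gamma_{0}\To\Delta_{0},\delta$ and $\delta,\Gamma_{1}\To\Delta_{1}$ one assumes the antecedents, derives $\bigvee\Delta_{0}\lor\delta$, and performs an intuitionistically legitimate case split on this \emph{proven} disjunction: the left disjunct yields the goal directly, while the right disjunct feeds $\delta$ into the second premise. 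The genuine obstacle, and the reason these calculi cannot simply inherit soundness from $\HLK$ (which only gives $\CL$-validity), is implication-right. For $\HLJ'$ the sole rule absent from $\HLK$ is $\left(\mathrm{\to\mhyphen R'}\right)$, whose soundness is precisely the currying equivalence $\left(\varphi\land\bigwedge\Gamma\to\psi\right)\leftrightarrow\left(\bigwedge\Gamma\to\left(\varphi\to\psi\right)\right)$; by contrast the unrestricted $\left(\mathrm{\to\mhyphen R}\right)$, which would pass from $\iota\left(\varphi,\Gamma\To\Delta,\psi\right)$ to $\iota\left(\Gamma\To\Delta,\varphi\to\psi\right)$, requires deciding $\bigvee\Delta$ or exhibiting $\varphi\to\psi$ and is \emph{not} $\INT$-valid once $\Delta\neq\varnothing$. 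This is exactly what forces the single-conclusion restriction of $\HLJ$, where every succedent is a single formula so that $\left(\mathrm{\to\mhyphen R}\right)$ already coincides with $\left(\mathrm{\to\mhyphen R'}\right)$, and the Maehara restriction of $\HLJ'$.

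For completeness (3), I would appeal to the classical completeness of Gentzen's $\LJ$ and Maehara's $\LJ'$ for $\INT$: if $\varphi$ is $\INT$-valid then $\LJ\vdash\To\varphi$ and $\LJ'\vdash\To\varphi$. Since a sequent is a one-component hypersequent, a direct comparison of the rule sets shows that $\HLJ$ (respectively $\HLJ'$), restricted to one-component hypersequents by taking $G$ empty throughout and never invoking $\left(\mathrm{ew}\right)$, $\left(\mathrm{ec}\right)$, $\left(\mathrm{ee}\right)$, contains every rule of $\LJ$ (respectively $\LJ'$). Reading the given ordinary derivation of $\To\varphi$ as a hypersequent derivation therefore yields $\HLJ\vdash\To\varphi$ and $\HLJ'\vdash\To\varphi$, establishing (3). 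The only non-routine content is thus the implication-right analysis in the soundness direction; everything else is either a standard intuitionistic propositional identity or the syntactic embedding just described.
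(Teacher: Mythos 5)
Your proposal is correct, and in fact there is nothing in the paper to compare it against: the paper states this Fact as known background and gives no proof of it. Your argument is the standard one that the paper implicitly relies on --- soundness by induction on derivations via the interpretation $\iota\left(S_{1}\mid\cdots\mid S_{n}\right)=\bigvee_{i}\iota\left(S_{i}\right)$, with the side hypersequent $G$ handled by monotonicity and distributivity of $\lor$, and completeness by reading an $\LJ$ (resp.\ Maehara $\LJ'$) derivation as a one-component hypersequent derivation --- and your key observation is the right one: the unrestricted $\left(\mathrm{\to\mhyphen R}\right)$ of $\HLK$ is not $\INT$-sound (its interpretation would yield instances of $\chi\lor\left(\varphi\to\psi\right)$ from $\varphi\to\chi\lor\psi$), which is precisely why the single-conclusion restriction of $\HLJ$ and the $\left(\mathrm{\to\mhyphen R'}\right)$ restriction of $\HLJ'$ are needed, while $\left(\mathrm{cut}\right)$ remains sound because the case split there is on a proven disjunction.
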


The pipe symbol $\mid$ can be interpreted as meta-disjunctions, so
hypersequent calculus well manipulates disjunctive axioms. For example,
G\"odel--Dummett propositional logic $\GD$ (i.e. $\INT+\LIN$)
can be characterised by the following structural rule, called the
\emph{communication rule}:

\[
\vcenter{\infer[\left(\com\right)]{\Gamma,\Delta'\To\Theta\mid\Gamma',\Delta\To\Theta'\mid G}{\Gamma,\Delta\To\Theta\mid G\amp\Gamma',\Delta'\To\Theta'\mid G}}.
\]
This rule is an intermediate between the external (hypersequent-level)
structure and the internal (sequent-level) structure.
\begin{fact}[\citet{Avr91}, \citep{Avr96}]
\label{cor:HLJ+com-is-GD}
\begin{enumerate}
\item If $\HLJ+\left(\com\right)\vdash\Gamma_{1}\To\varphi_{1}\mid\cdots\mid\Gamma_{n}\To\varphi_{n}$,
then $\bigvee_{i=1}^{n}\left(\bigwedge\Gamma_{i}\to\varphi_{i}\right)$
is $\GD$-valid.
\item If $\HLJ'+\left(\com\right)\vdash\Gamma_{1}\To\Delta_{1}\mid\cdots\mid\Gamma_{n}\To\Delta_{n}$,
then $\bigvee_{i=1}^{n}\left(\bigwedge\Gamma_{i}\to\bigvee\Delta_{i}\right)$
is $\GD$-valid.
\item If $\varphi$ is $\GD$-valid, then $\HLJ+\left(\com\right)\vdash\To\varphi$
and $\HLJ'+\left(\com\right)\vdash\To\varphi$.
\end{enumerate}
\end{fact}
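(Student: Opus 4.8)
The plan is to treat the three parts together: the soundness statements (1) and (2) by a single induction on $\HLJ+\left(\com\right)$- resp.\ $\HLJ'+\left(\com\right)$-derivations, and the completeness statement (3) by simulating a Hilbert-style axiomatisation of $\GD$. For (3) it suffices to treat $\HLJ+\left(\com\right)$, since every $\HLJ$-derivation is already an $\HLJ'$-derivation (single-conclusion sequents are a special case, with $\left(\mathrm{\to\mhyphen R}\right)$ collapsing to $\left(\mathrm{\to\mhyphen R'}\right)$), so $\HLJ+\left(\com\right)\vdash\To\varphi$ will entail $\HLJ'+\left(\com\right)\vdash\To\varphi$. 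Throughout I write $I\left(G\right)=\bigvee_{i}\left(\bigwedge\Gamma_{i}\to\bigvee\Delta_{i}\right)$ for the formula interpretation of a hypersequent $G$, and I will use the classical facts that $\GD$ is sound and complete both for the class of G\"odel algebras (prelinear Heyting algebras, i.e.\ those satisfying $\left(x\to y\right)\lor\left(y\to x\right)=\top$) and for its axiomatisation $\INT+\LIN$.

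For soundness I would show, by induction on the derivation, that $I\left(G\right)$ is valid in every G\"odel algebra whenever $G$ is derivable. The axioms $\left(\mathrm{Id}\right)$ and $\left(\mathrm{Bot}\right)$ clearly have valid interpretations, and the very computations that establish the earlier $\INT$-soundness Facts show that every rule other than $\left(\com\right)$ preserves validity over \emph{all} Heyting algebras, hence over G\"odel algebras. The crux is $\left(\com\right)$. Fixing a G\"odel algebra and a valuation, and writing $a,b,a',b'$ for the values of $\bigwedge\Gamma,\bigwedge\Delta,\bigwedge\Gamma',\bigwedge\Delta'$, $t,t'$ for those of $\bigvee\Theta,\bigvee\Theta'$, and $g$ for that of $I\left(G\right)$, one distributes the two premise-values over $g$ and reduces the claim to the $g$-free inequality
\[
\left(\left(a\land b\right)\to t\right)\land\left(\left(a'\land b'\right)\to t'\right)\le\left(\left(a\land b'\right)\to t\right)\lor\left(\left(a'\land b\right)\to t'\right).
\]
This is where linearity enters: multiplying the left-hand side by $\top=\left(b\to b'\right)\lor\left(b'\to b\right)$, distributing, and applying residuation together with $x\land\left(x\to y\right)\le y$ sends the conjunct guarded by $b'\to b$ below $\left(a\land b'\right)\to t$ and the conjunct guarded by $b\to b'$ below $\left(a'\land b\right)\to t'$. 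This settles (1); the argument for (2) is identical, since the interpretation has the same shape.

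For completeness I would induct on a Hilbert derivation of a $\GD$-theorem $\varphi$ in $\INT+\LIN$. If $\varphi$ is an $\INT$-theorem, then $\HLJ\vdash\To\varphi$ by the corresponding earlier Fact, and a fortiori $\HLJ+\left(\com\right)\vdash\To\varphi$. Modus ponens is absorbed by $\left(\mathrm{cut}\right)$: from $\To\psi$ and $\To\psi\to\chi$ one forms $\psi\to\chi\To\chi$ by $\left(\mathrm{\to\mhyphen L}\right)$ applied to $\To\psi$ and $\left(\mathrm{Id}\right)$ $\chi\To\chi$, and then cuts on $\psi\to\chi$. The only genuinely new step is to derive each instance of $\LIN$, and here $\left(\com\right)$ does the work: from the two identity axioms $\varphi\To\varphi$ and $\psi\To\psi$, a single application of $\left(\com\right)$ with empty side hypersequent and the two antecedent formulae placed in the crossing slots yields
\[
\psi\To\varphi\mid\varphi\To\psi;
\]
applying $\left(\mathrm{\to\mhyphen R'}\right)$ to both components, then $\left(\mathrm{\lor_{i}\mhyphen R}\right)$ to both, and finally $\left(\mathrm{ec}\right)$ produces $\To\left(\varphi\to\psi\right)\lor\left(\psi\to\varphi\right)$, as required.

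I expect the main obstacle to be the soundness of $\left(\com\right)$, i.e.\ the displayed inequality: it is precisely here that validity over arbitrary Heyting algebras is insufficient and the prelinearity identity $\left(b\to b'\right)\lor\left(b'\to b\right)=\top$ must be invoked, and one must verify that residuation routes each disjunct to the correct crossed component. Everything else reduces to a direct appeal to the earlier Facts or to the short cut and communication derivations above.
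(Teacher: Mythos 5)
Your proposal is correct, but it does substantially more than the paper's own proof, which consists of the citation to Avron plus a \emph{recollection} of the single derivation $\HLJ+\left(\com\right)\vdash\To\LIN$: two instances of $\left(\mathrm{Id}\right)$, one application of $\left(\com\right)$ with empty contexts yielding $\varphi\To\psi\mid\psi\To\varphi$, then $\left(\mathrm{\to\mhyphen R'}\right)$, $\left(\mathrm{\lor\mhyphen R}\right)$ and $\left(\mathrm{ec}\right)$ --- which is, up to component order and applications of $\left(\mathrm{ee}\right)$, exactly the $\LIN$ derivation inside your completeness argument. Everything else you supply is material the paper outsources to the references, and it checks out: for soundness, the reduction of $\left(\com\right)$ to the $g$-free inequality is legitimate by distributivity, and your prelinearity argument is the standard one (residuation gives $\left(b'\to b\right)\land\left(\left(a\land b\right)\to t\right)\le\left(a\land b'\right)\to t$ since $a\land b'\land\left(b'\to b\right)\le a\land b$, and symmetrically for the other disjunct); for completeness, the induction over an $\INT+\LIN$ Hilbert derivation (earlier Fact for $\INT$-theorems, $\left(\mathrm{cut}\right)$ for modus ponens, $\left(\com\right)$ for $\LIN$, and the inclusion $\HLJ+\left(\com\right)\subseteq\HLJ'+\left(\com\right)$, which the paper itself invokes in Section 3) is exactly Avron's scheme. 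What the paper's approach buys is brevity and attribution; what yours buys is a self-contained proof, at the price of additionally invoking the algebraic completeness of $\GD$ with respect to prelinear Heyting algebras to pass between ``$\GD$-valid'' and ``derivable in $\INT+\LIN$''. One point you should make explicit: your modus-ponens step instantiates $\left(\mathrm{\to\mhyphen L}\right)$ with premises $\To\psi$ and $\chi\To\chi$ and conclusion $\psi\to\chi\To\chi$; under the most literal reading of ``$\HLJ$ is $\HLK$ with all sequents single-conclusion'', the shared context $\Delta$ in the table's $\left(\mathrm{\to\mhyphen L}\right)$ sits next to $\varphi$ in the first premise and so would be forced empty, blocking this instance. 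The reading that makes the paper's completeness Fact for $\HLJ$ true is the Gentzen-style one (succedent $\Delta$ present only in the second premise and conclusion), and under that reading your instance is legal; alternatively you can run the cut in $\HLJ'+\left(\com\right)$, where no restriction interferes, and still recover the $\HLJ$ half from the paper's intended formulation.
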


\begin{proof}
We only recall the proof of $\HLJ+\left(\com\right)\vdash\To\LIN$.
\[
\infer[\left(\mathrm{ec}\right)]{\To\left(\varphi\to\psi\right)\lor\left(\psi\to\varphi\right)}{\infer=[\left(\mathrm{\lor\mhyphen R}\right),\left(\mathrm{ee}\right)]{\To\left(\varphi\to\psi\right)\lor\left(\psi\to\varphi\right)\mid\To\left(\varphi\to\psi\right)\lor\left(\psi\to\varphi\right)}{\infer=[\left(\mathrm{\to\mhyphen R'}\right),\left(\mathrm{ee}\right)]{\To\varphi\to\psi\mid\To\psi\to\varphi}{\infer[\left(\com\right)]{\varphi,\varnothing\To\psi\mid\psi,\varnothing\To\varphi}{\infer[\left(\mathrm{Id}\right)]{\varphi,\varnothing\To\varphi}{}\amp\infer[\left(\mathrm{Id}\right)]{\psi,\varnothing\To\psi}{}}}}}\qedhere
\]
\end{proof}
Let us move on to predicate logics. The hypersequent calculi for intuitionistic
predicate logic ($\forall\INT$) can be obtained by adding $\HLJ$
and $\HLJ'$ with the quantifier rules (\prettyref{tab:Quantifier-rules}).
We refer to the resulting systems as $\forall\HLJ$ and $\forall\HLJ'$,
respectively.
\begin{table}
\centering%
\begin{tabular}{cc}
\noalign{\vskip2mm}
\multicolumn{2}{l}{Quantifier rules}\tabularnewline[2mm]
\noalign{\vskip2mm}
$\infer[\left(\mathrm{\forall\mhyphen L}\right)]{\forall x\varphi,\Gamma\To\Delta\mid G}{\left[t/x\right]\varphi,\Gamma\To\Delta\mid G}$ & $\infer[\left(\mathrm{\forall\mhyphen R_{ss}}\right)]{\Gamma\To\forall x\varphi}{\Gamma\To\varphi}$\tabularnewline[2mm]
\noalign{\vskip2mm}
 & provided that $x$ does not freely occur in $\Gamma$.\tabularnewline[2mm]
\noalign{\vskip2mm}
$\infer[\left(\mathrm{\exists\mhyphen L_{s}}\right)]{\exists x\varphi,\Gamma\To\Delta}{\varphi,\Gamma\To\Delta}$ & $\infer[\left(\mathrm{\exists\mhyphen R}\right)]{\Gamma\To\Delta,\exists x\psi\mid G}{\Gamma\To\Delta,\left[t/x\right]\psi\mid G}$\tabularnewline[2mm]
\noalign{\vskip2mm}
provided that $x$ does not freely occur in $\Gamma,\Delta$. & \tabularnewline[2mm]
\end{tabular}\caption{\label{tab:Quantifier-rules}}
\end{table}
 As the eigenvariable condition suggests, an (open) hypersequent $\left(\Gamma_{1}\To\Delta_{1}\right)\left(\vec{x}\right)\mid\cdots\mid\left(\Gamma_{n}\To\Delta_{n}\right)\left(\vec{x}\right)$
with free variables $\vec{x}$ represents a closed formula $\forall\vec{x}\bigvee_{i=1}^{n}\left(\bigwedge\Gamma_{i}\left(\vec{x}\right)\to\bigvee\Delta_{i}\left(\vec{x}\right)\right)$.
\begin{fact}
\label{fact:HLJ1-is-INT1}
\begin{enumerate}
\item If $\forall\HLJ\vdash\Gamma_{1}\To\varphi_{1}\mid\cdots\mid\Gamma_{n}\To\varphi_{n}$,
the universal closure of $\bigvee_{i=1}^{n}\left(\bigwedge\Gamma_{i}\to\varphi_{i}\right)$
is $\forall\INT$-valid.
\item If $\forall\HLJ'\vdash\Gamma_{1}\To\Delta_{1}\mid\cdots\mid\Gamma_{n}\To\Delta_{n}$,
the universal closure of $\bigvee_{i=1}^{n}\left(\bigwedge\Gamma_{i}\to\bigvee\Delta_{i}\right)$
is $\forall\INT$-valid.
\item If $\varphi$ is $\forall\INT$-valid, then $\forall\HLJ\vdash\To\varphi$
and $\forall\HLJ'\vdash\To\varphi$.
\end{enumerate}
\end{fact}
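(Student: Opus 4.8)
The plan is to prove the soundness items (1) and (2) by induction on the height of derivations, building on the corresponding propositional facts, and to prove completeness (3) by embedding a known complete sequent calculus for $\forall\INT$ into the single-component fragment of $\forall\HLJ$ and $\forall\HLJ'$. For soundness it suffices to show that, writing $\mathcal{I}\left(H\right)$ for the universal closure of the formula $\bigvee_{i}\left(\bigwedge\Gamma_{i}\to\bigvee\Delta_{i}\right)$ read off a hypersequent $H$, every derivable $H$ satisfies $\forall\INT\vdash\mathcal{I}\left(H\right)$ (so that $\mathcal{I}\left(H\right)$ is $\forall\INT$-valid). The structural, cut, and propositional rules are handled exactly as in the proofs of the earlier facts, so only the four rules of \prettyref{tab:Quantifier-rules} require new work. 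The two context-carrying rules are routine: for $\left(\mathrm{\forall\mhyphen L}\right)$ one invokes the intuitionistic theorem $\forall x\varphi\to\left[t/x\right]\varphi$, which strengthens the relevant antecedent, and for $\left(\mathrm{\exists\mhyphen R}\right)$ one invokes $\left[t/x\right]\psi\to\exists x\psi$, which weakens the relevant succedent disjunct; in each case the implication from the premise interpretation to the conclusion interpretation is an intuitionistic tautology, and it is preserved under the outer disjunction over components and under the universal closure.

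The delicate cases are the two rules carrying eigenvariable conditions, $\left(\mathrm{\forall\mhyphen R_{ss}}\right)$ and $\left(\mathrm{\exists\mhyphen L_{s}}\right)$, and this is exactly where the single-sequent restriction (the absence of a side hypersequent $G$) is essential. For $\left(\mathrm{\forall\mhyphen R_{ss}}\right)$ the premise interpretation is $\forall\vec{x}\left(\bigwedge\Gamma\to\varphi\right)$ with $x\in\vec{x}$; since $x\notin\mathrm{FV}\left(\Gamma\right)$, the intuitionistically valid equivalence $\forall x\left(\chi\to\varphi\right)\leftrightarrow\left(\chi\to\forall x\varphi\right)$ for $x\notin\mathrm{FV}\left(\chi\right)$ converts it into $\forall\vec{x}'\left(\bigwedge\Gamma\to\forall x\varphi\right)$, the interpretation of the conclusion (where $\vec{x}'=\vec{x}\setminus\left\{ x\right\}$, and $x$ is no longer free in the conclusion). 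Symmetrically, $\left(\mathrm{\exists\mhyphen L_{s}}\right)$ uses $\forall x\left(\varphi\to\chi\right)\leftrightarrow\left(\exists x\varphi\to\chi\right)$ for $x\notin\mathrm{FV}\left(\chi\right)$ with $\chi=\bigwedge\Gamma\to\bigvee\Delta$. I expect this to be the main obstacle: one must verify that the eigenvariable is genuinely captured by the outer $\forall$ and leaks into no other disjunct, which is precisely why these two rules must forbid a context $G$ in which $x$ could occur freely.

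For completeness I would avoid a fresh Kripke-model or Henkin argument and instead appeal to the completeness of Gentzen's sequent calculus $\LJ$ for $\forall\INT$ (with cut), then embed it. A single-conclusion sequent is a one-component hypersequent, and each $\LJ$ rule is the $G=\varnothing$ instance of the matching rule of $\forall\HLJ$, the quantifier rules of \prettyref{tab:Quantifier-rules} coinciding with those of $\LJ$. Hence if $\varphi$ is $\forall\INT$-valid then $\LJ\vdash\,\To\varphi$, so $\forall\HLJ\vdash\,\To\varphi$. The very same derivation is also a $\forall\HLJ'$ derivation, since every single-conclusion application of $\left(\mathrm{\to\mhyphen R}\right)$ is already an instance of the restricted rule $\left(\mathrm{\to\mhyphen R'}\right)$ with empty $\Delta$; therefore $\forall\HLJ'\vdash\,\To\varphi$ as well. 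The only thing to check in this part is the verbatim matching of rules, so I anticipate no genuine difficulty here.
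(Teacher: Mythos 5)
Your proof is correct, and it is worth noting at the outset that the paper offers nothing to compare it against: this statement is displayed as a \emph{Fact}, i.e.\ quoted as known background, with no proof given. Your argument supplies the standard missing details, and both halves check out. For soundness, the rule-by-rule induction on provability in $\forall\INT$ of the universal closure of $\bigvee_{i}\left(\bigwedge\Gamma_{i}\to\bigvee\Delta_{i}\right)$ works, and your isolation of the two eigenvariable rules is exactly the right point: with a side hypersequent $G$ the needed shift $\forall x\left(\left(\chi\to\varphi\right)\vee\gamma\right)\to\left(\chi\to\forall x\varphi\right)\vee\gamma$ is an instance of $\ACD$ and fails in $\forall\INT$, which is precisely why the paper's rule $\left(\mathrm{\forall\mhyphen R_{ms}}\right)$ generates $\ACD$ while $\left(\mathrm{\forall\mhyphen R_{ss}}\right)$ does not. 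For completeness, the verbatim reading of a (cut-full) $\forall\LJ$ derivation as a one-component hypersequent derivation is fine, including the observation that single-conclusion instances of $\left(\mathrm{\to\mhyphen R}\right)$ are instances of $\left(\mathrm{\to\mhyphen R'}\right)$. The closest thing in the paper to a proof of items (1)--(2) is Theorem \ref{thm:strong-soundness} together with the Remark following it, and that route is genuinely different from yours: instead of interpreting hypersequents semantically, the paper syntactically \emph{projects} any $\forall\HLJ$ (or $\forall\HLJ'$) derivation of $S_{1}\mid\cdots\mid S_{n}$ to a $\forall\LJ$ (resp.\ $\forall\LJ'$) derivation of a single component $S_{i}$, giving the strictly stronger conclusion that some one disjunct is already valid, from which (1)--(2) follow by soundness of the sequent calculus. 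The projection argument buys strong soundness, and is possible exactly because every rule of these base calculi acts within a single component; your interpretation-based induction proves less, but it is the argument that still applies to the extensions with $\left(\com\right)$, $\left(\rs\right)$ or $\left(\ls\right)$, where strong soundness fails (e.g.\ $\varphi\To\psi\mid\psi\To\varphi$ is derivable although neither component is).
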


We also consider the multi-component single-conclusioned $\forall$-right
rule:
\[
\vcenter{\infer[\left(\mathrm{\forall\mhyphen R_{ms}}\right)]{\Gamma\To\forall x\varphi\mid G}{\Gamma\To\varphi\mid G}},
\]
where the variable $x$ does not freely occur in the lower hypersequent.
Apparently the rule $\left(\mathrm{\forall\mhyphen R_{ms}}\right)$
asserts that $\forall\vec{w}\forall x\left(\left(\gamma\left(\vec{w}\right)\to\varphi\left(x,\vec{w}\right)\right)\vee\psi\left(\vec{w}\right)\right)$
implies $\forall\vec{w}\left(\left(\gamma\left(\vec{w}\right)\to\forall x\varphi\left(x,\left(\vec{w}\right)\right)\right)\vee\psi\left(\left(\vec{w}\right)\right)\right)$,
a form of $\ACD$. However, to extract $\ACD$ from $\left(\mathrm{\forall\mhyphen R_{ms}}\right)$,
we need the communication rule $\left(\com\right)$. The combination
of $\left(\mathrm{\forall\mhyphen R_{ms}}\right)$ and $\left(\com\right)$
characterises G\"odel--Dummett predicate logic $\forall\GD:=\forall\INT+\LIN+\ACD$.
\begin{fact}[\citet{BZ00}]
\label{fact:HLJ1+com-is-GD1}
\begin{enumerate}
\item If $\forall\HLJ+\left(\mathrm{\forall\mhyphen R_{ms}}\right)+\left(\com\right)\vdash\Gamma_{1}\To\varphi_{1}\mid\cdots\mid\Gamma_{n}\To\varphi_{n}$,
the universal closure of $\bigvee_{i=1}^{n}\left(\bigwedge\Gamma_{i}\to\varphi_{i}\right)$
is $\forall\GD$-valid.
\item If $\forall\HLJ'+\left(\mathrm{\forall\mhyphen R_{ms}}\right)+\left(\com\right)\vdash\Gamma_{1}\To\Delta_{1}\mid\cdots\mid\Gamma_{n}\To\Delta_{n}$,
the universal closure of $\bigvee_{i=1}^{n}\left(\bigwedge\Gamma_{i}\to\bigvee\Delta_{i}\right)$
is $\forall\GD$-valid.
\item If $\varphi$ is $\forall\GD$-valid, then $\forall\HLJ+\left(\mathrm{\forall\mhyphen R_{ms}}\right)+\left(\com\right)\vdash\To\varphi$
and $\forall\HLJ'+\left(\mathrm{\forall_{ms}\mhyphen R}\right)+\left(\com\right)\vdash\To\varphi$.
\end{enumerate}
\end{fact}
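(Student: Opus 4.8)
The plan is to treat the three parts by the same division of labour used for the earlier systems: parts (1) and (2) (soundness) by induction on derivations, and part (3) (completeness) by deriving the two characteristic axioms $\LIN$ and $\ACD$ and invoking the completeness of the base calculus. Throughout I read $\forall\GD$-validity semantically as validity in the linearly ordered, constant-domain Kripke models, and I identify $\forall\GD=\forall\INT+\LIN+\ACD$ with derivability in the corresponding Hilbert calculus, so that part (3) reduces to a purely syntactic task.

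For soundness I would show that every rule preserves validity of the associated closed formula. The rules of $\forall\HLJ$ and $\forall\HLJ'$ are already sound for $\forall\INT\subseteq\forall\GD$ by \prettyref{fact:HLJ1-is-INT1}. The communication rule $\left(\com\right)$ transfers directly from the propositional case (\prettyref{cor:HLJ+com-is-GD}): the implication from its premises to its conclusion is a propositional $\GD$-consequence built over the compound formulae $\bigwedge\Gamma,\bigwedge\Delta,\ldots$, so its validity depends only on the frame being linearly ordered and is unaffected by the domains. The one genuinely new case, and the main obstacle on the soundness side, is $\left(\mathrm{\forall\mhyphen R_{ms}}\right)$. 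Writing $H$ for the translation of $G$ and using the eigenvariable condition ($x$ free in neither $\Gamma$ nor $H$), I would verify that
\[
\forall x\bigl(\bigl(\bigwedge\Gamma\to\varphi\bigr)\lor H\bigr)\To\bigl(\bigwedge\Gamma\to\forall x\varphi\bigr)\lor H
\]
is valid in every constant-domain model: the axiom $\ACD$ pushes $\forall x$ past $\lor H$ (legitimate since $x\notin H$), and the intuitionistically valid shift $\forall x(\bigwedge\Gamma\to\varphi)\to(\bigwedge\Gamma\to\forall x\varphi)$ (legitimate since $x\notin\Gamma$) completes the step. This is precisely where constant domains are indispensable.

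For completeness it suffices to derive each schema of $\forall\INT+\LIN+\ACD$ and to note that the calculus is closed under modus ponens (realised through $\left(\mathrm{cut}\right)$) and generalisation (through the $\forall$-right rules). The $\forall\INT$-theorems come from \prettyref{fact:HLJ1-is-INT1}, and $\To\LIN$ is derived exactly as recalled above for \prettyref{cor:HLJ+com-is-GD}; the heart of the argument is a derivation of $\To\ACD$ that already lives in the single-conclusion system $\forall\HLJ+\left(\mathrm{\forall\mhyphen R_{ms}}\right)+\left(\com\right)$ (hence a fortiori in its multi-conclusion extension). Writing $A:=\forall x(\varphi\lor\psi(x))$ with $x\notin\varphi$ and choosing a fresh $a$, I would first build the two-component hypersequent $\varphi\lor\psi(a)\To\psi(a)\mid\varphi\lor\psi(a)\To\varphi$: applying $\left(\mathrm{\lor\mhyphen L}\right)$ to both antecedents yields four leaves, three of which are instances of $\left(\mathrm{Id}\right)$ extended by $\left(\mathrm{ew}\right)$, while the crossed leaf $\varphi\To\psi(a)\mid\psi(a)\To\varphi$ is obtained from $\varphi\To\varphi$ and $\psi(a)\To\psi(a)$ by $\left(\com\right)$ and $\left(\mathrm{ee}\right)$. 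Two applications of $\left(\mathrm{\forall\mhyphen L}\right)$ turn the antecedents into $A$, giving $A\To\psi(a)\mid A\To\varphi$; applying $\left(\mathrm{\forall\mhyphen R_{ms}}\right)$ to the first component and abstracting $a$ (which occurs neither in $A$ nor in $\varphi$, so the eigenvariable condition holds) yields $A\To\forall x\psi(x)\mid A\To\varphi$; two disjunction-right inferences give $A\To\varphi\lor\forall x\psi(x)$ in both components, and $\left(\mathrm{ec}\right)$ followed by $\left(\mathrm{\to\mhyphen R'}\right)$ produces $\To\ACD$. The interplay of $\left(\com\right)$ (to discharge the crossed leaf) and $\left(\mathrm{\forall\mhyphen R_{ms}}\right)$ (to quantify in one component while the other still carries $\varphi$) is exactly the syntactic counterpart of the role played by constant domains in the soundness argument.
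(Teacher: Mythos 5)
Your proposal is correct and, on the part the paper actually proves, follows essentially the same route: your derivation of $\To\ACD$ (the crossed leaf $\varphi\To\psi(a)\mid\psi(a)\To\varphi$ obtained by $\left(\com\right)$, two $\left(\mathrm{\lor\mhyphen L}\right)$ inferences, $\left(\mathrm{\forall\mhyphen L}\right)$, then $\left(\mathrm{\forall\mhyphen R_{ms}}\right)$ in one component while the other retains $\varphi$, followed by $\left(\mathrm{\lor\mhyphen R}\right)$, $\left(\mathrm{ec}\right)$, $\left(\mathrm{\to\mhyphen R'}\right)$) is, up to your use of a fresh variable $a$ in place of $x$, exactly the paper's proof figure. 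The surrounding soundness induction and the reduction of completeness to deriving $\LIN$, $\ACD$ and closure under cut/generalisation are the standard arguments that the paper delegates to the cited reference \citep{BZ00}, and your sketches of them are sound.
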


\begin{proof}
We only recall the proof of $\forall\HLJ+\left(\mathrm{\forall\mhyphen R_{ms}}\right)+\left(\com\right)\vdash\To\ACD$.
\[
\infer[\left(\mathrm{\to\mhyphen R'}\right)]{\To\forall x\left(\varphi\lor\psi\left(x\right)\right)\to\varphi\lor\forall x\psi\left(x\right)}{\infer[\left(\mathrm{ec}\right)]{\forall x\left(\varphi\lor\psi\left(x\right)\right)\To\varphi\lor\forall x\psi\left(x\right)}{\infer=[\left(\mathrm{\lor\mhyphen R}\right),\left(\mathrm{ee}\right)]{\forall x\left(\varphi\lor\psi\left(x\right)\right)\To\varphi\lor\forall x\psi\left(x\right)\mid\forall x\left(\varphi\lor\psi\left(x\right)\right)\To\varphi\lor\forall x\psi\left(x\right)}{\infer[\left(\mathrm{\forall\mhyphen R_{ms}}\right)]{\forall x\left(\varphi\lor\psi\left(x\right)\right)\To\forall x\psi\left(x\right)\mid\forall x\left(\varphi\lor\psi\left(x\right)\right)\To\varphi}{\infer=[\left(\mathrm{\forall\mhyphen L}\right),\left(\mathrm{ee}\right)]{\forall x\left(\varphi\lor\psi\left(x\right)\right)\To\psi\left(x\right)\mid\forall x\left(\varphi\lor\psi\left(x\right)\right)\To\varphi}{\infer[\left(\mathrm{\lor\mhyphen L}\right)]{\varphi\lor\psi\left(x\right)\To\psi\left(x\right)\mid\varphi\lor\psi\left(x\right)\To\varphi}{\infer[\left(\mathrm{ee}\right)]{\varphi\To\psi\left(x\right)\mid\varphi\lor\psi\left(x\right)\To\varphi}{\infer[\left(\mathrm{\lor\mhyphen L}\right)]{\varphi\lor\psi\left(x\right)\To\varphi\mid\varphi\To\psi\left(x\right)}{\infer=[\left(\mathrm{ew}\right)\left(\mathrm{ee}\right)]{\varphi\To\varphi\mid\varphi\To\psi\left(x\right)}{\infer[\left(\mathrm{Id}\right)]{\varphi\To\varphi}{}}\amp\infer[\left(\com\right)]{\psi\left(x\right)\To\varphi\mid\varphi\To\psi\left(x\right)}{\infer[\left(\mathrm{Id}\right)]{\psi\left(x\right)\To\psi\left(x\right)}{}\amp\infer[\left(\mathrm{Id}\right)]{\varphi\To\varphi}{}}}}\amp\infer=[\left(\mathrm{ew}\right)\left(\mathrm{ee}\right)]{\psi\left(x\right)\To\psi\left(x\right)\mid\varphi\lor\psi\left(x\right)\To\varphi}{\infer[\left(\mathrm{Id}\right)]{\psi\left(x\right)\To\psi\left(x\right)}{}}}}}}}}\qedhere
\]
\end{proof}
Similarly, the multi-component $\exists$-left rule
\[
\infer[\left(\mathrm{\exists\mhyphen L_{m}}\right)]{\exists x\varphi,\Gamma\To\Delta\mid G}{\varphi,\Gamma\To\Delta\mid G}
\]
asserts that $\forall\vec{w}\forall x\left(\left(\varphi\left(x,\vec{w}\right)\land\gamma\left(\vec{w}\right)\to\delta\left(\vec{w}\right)\right)\lor\psi\left(\vec{w}\right)\right)$
implies $\forall\vec{w}\left(\left(\exists x\varphi\left(x,\vec{w}\right)\land\gamma\left(\vec{w}\right)\to\delta\left(\vec{w}\right)\right)\lor\psi\left(\vec{w}\right)\right)$,
and depends on $\ACD$. Recall the (informal) proof of this assertion:
suppose $\forall x\left(\left(\varphi\left(x\right)\land\gamma\to\delta\right)\lor\psi\right)$.
Applying the axiom schema $\ACD$, we have $\forall x\left(\varphi\left(x\right)\land\gamma\to\delta\right)\lor\psi$.
Since $\forall x\left(\varphi\left(x\right)\land\gamma\to\delta\right)\to\left(\exists x\varphi\left(x\right)\land\gamma\to\delta\right)$
is $\forall\INT$-valid, we obtain the desired conclusion $\left(\exists x\varphi\left(x\right)\land\gamma\to\delta\right)\lor\psi$.
The system $\forall\HLJ'+\left(\mathrm{\exists\mhyphen L_{m}}\right)+\left(\com\right)$
is therefore sound with respect to $\forall\GD$.
\begin{problem}
$\forall\HLJ'+\left(\mathrm{\exists\mhyphen L_{m}}\right)+\left(\com\right)\vdash\To\ACD$?
\end{problem}

\begin{rem}
One can obtain the proof figure of $\forall\HLJ+\left(\mathrm{\exists\mhyphen L_{m}}\right)+\left(\com\right)\vdash\To\varphi\land\exists x\psi\left(x\right)\to\exists x\left(\varphi\land\psi\left(x\right)\right)$
as the dual of the proof figure in \prettyref{fact:HLJ1+com-is-GD1}:
\[
\infer[\left(\mathrm{\to\mhyphen R'}\right)]{\To\varphi\land\exists x\psi\left(x\right)\to\exists x\left(\varphi\land\psi\left(x\right)\right)}{\infer[\left(\mathrm{ec}\right)]{\varphi\land\exists x\psi\left(x\right)\To\exists x\left(\varphi\land\psi\left(x\right)\right)}{\infer=[\left(\mathrm{\land\mhyphen L}\right),\left(\mathrm{ee}\right)]{\varphi\land\exists x\psi\left(x\right)\To\exists x\left(\varphi\land\psi\left(x\right)\right)\mid\varphi\land\exists x\psi\left(x\right)\To\exists x\left(\varphi\land\psi\left(x\right)\right)}{\infer[\left(\mathrm{\exists\mhyphen L_{m}}\right)]{\exists x\psi\left(x\right)\To\exists x\left(\varphi\land\psi\left(x\right)\right)\mid\varphi\To\exists x\left(\varphi\land\psi\left(x\right)\right)}{\infer=[\left(\mathrm{\exists\mhyphen R}\right),\left(\mathrm{ee}\right)]{\psi\left(x\right)\To\exists x\left(\varphi\land\psi\left(x\right)\right)\mid\varphi\To\exists x\left(\varphi\land\psi\left(x\right)\right)}{\infer[\left(\mathrm{\land\mhyphen R}\right)]{\psi\left(x\right)\To\varphi\land\psi\left(x\right)\mid\varphi\To\varphi\land\psi\left(x\right)}{\infer[\left(\mathrm{ee}\right)]{\psi\left(x\right)\To\varphi\mid\varphi\To\varphi\land\psi\left(x\right)}{\infer[\left(\mathrm{\land\mhyphen R}\right)]{\varphi\To\varphi\land\psi\left(x\right)\mid\psi\left(x\right)\To\varphi}{\infer=[\left(\mathrm{ew}\right)\left(\mathrm{ee}\right)]{\varphi\To\varphi\mid\psi\left(x\right)\To\varphi}{\infer[\left(\mathrm{Id}\right)]{\varphi\To\varphi}{}}\amp\infer[\left(\com\right)]{\varphi\To\psi\left(x\right)\mid\psi\left(x\right)\To\varphi}{\infer[\left(\mathrm{Id}\right)]{\varphi\To\varphi}{}\amp\infer[\left(\mathrm{Id}\right)]{\psi\left(x\right)\To\psi\left(x\right)}{}}}}\amp\infer=[\left(\mathrm{ew}\right)\left(\mathrm{ee}\right)]{\psi\left(x\right)\To\psi\left(x\right)\mid\varphi\To\varphi\land\psi\left(x\right)}{\infer[\left(\mathrm{Id}\right)]{\psi\left(x\right)\To\psi\left(x\right)}{}}}}}}}}
\]
Evidently the meta-formula $\varphi\land\exists x\psi\left(x\right)\to\exists x\left(\varphi\land\psi\left(x\right)\right)$
is $\forall\INT$-valid.
\end{rem}

Hypersequent calculi for intermediate logics such as $\GD$, $\forall\GD$
and $\forall\INT+\LIN$ have been extensively studied. See e.g. \citep{Avr91,Avr96,BZ00,BLZ13,Cia05,Cia2013,CRW2014,Tiu11}.

We aims to develop well-behaved proof systems for $\forall\INT+\ACD$
and $\forall\INT+\LIN$ via hypersequent calculus. For our purpose,
it is beneficial to specify the sources of $\ACD$ and $\LIN$ in
$\forall\HLJ+\left(\mathrm{\forall_{ms}\mhyphen R}\right)+\left(\com\right)$.
In \prettyref{sec:Splitting}, we introduce the right split rule (rs)
and the left split rule $\left(\ls\right)$ to clarify the communication
rule $\left(\com\right)$. We prove that
\begin{enumerate}
\item $\HLJ'+\left(\rs\right)$ and $\HLJ'+\left(\ls\right)$ are sound
and complete with respect to $\GD$;
\item $\forall\HLJ'+\left(\mathrm{\forall\mhyphen R_{ms}}\right)+\left(\rs\right)$
are sound and complete with respect to $\forall\GD$; and
\item $\forall\HLJ'+\left(\mathrm{\forall\mhyphen R_{ms}}\right)+\left(\ls\right)$
is equivalent to or stronger than $\forall\INT+\LIN$.
\end{enumerate}
In \prettyref{sec:Forall-rules}, we show that
\begin{enumerate}
\item $\forall\HLJ+\left(\com\right)$ and $\forall\HLJ'+\left(\com\right)$
are sound and complete with respect to $\forall\INT+\LIN$; and
\item $\forall\HLJ+\left(\mathrm{\forall\mhyphen R_{ms}}\right)+\left(\mathrm{\exists\mhyphen L_{m}}\right)$
and $\forall\HLJ'+\left(\mathrm{\forall\mhyphen R_{ms}}\right)+\left(\mathrm{\exists\mhyphen L_{m}}\right)$
are sound and complete with respect to $\forall\INT$.
\end{enumerate}
In \prettyref{sec:Future-work}, we conclude the paper with some future
research directions.

\section{\label{sec:Splitting}Splitting rules}

We analyse the communication rule by dividing it into two rules. We
first consider the \emph{right split rule}:
\[
\vcenter{\infer[\left(\rs\right)]{\Gamma\To\Delta_{1}\mid\Gamma\To\Delta_{2}\mid G}{\Gamma\To\Delta_{1},\Delta_{2}\mid G}}.
\]
In the algebraic point of view, this rule corresponds to the inequality
$\left(\gamma\to\delta_{1}\vee\delta_{2}\right)\leq\left(\gamma\to\delta_{1}\right)\vee\left(\gamma\to\delta_{2}\right)$,
which is known to be equivalent to $\LIN$ (see \citet[Proposition 2]{DMKJ20}).
\begin{thm}
\begin{enumerate}
\item $\HLJ'+\left(\rs\right)$ proves $\LIN$.
\item $\forall\HLJ'+\left(\mathrm{\forall\mhyphen R_{ms}}\right)+\left(\rs\right)$
proves $\ACD$.
\end{enumerate}
\end{thm}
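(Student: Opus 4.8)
The plan is to mimic the $\left(\com\right)$-based derivations recalled earlier (the proof of $\To\LIN$ above and the proof of $\To\ACD$ in \prettyref{fact:HLJ1+com-is-GD1}), replacing every use of $\left(\com\right)$ by $\left(\rs\right)$. The essential difference between the two rules is that $\left(\com\right)$ \emph{permutes the antecedents} across the two produced components, whereas $\left(\rs\right)$ leaves the common antecedent $\Gamma$ untouched and only splits the succedent. Consequently we cannot hope to reach the pair of components $\varphi\To\psi\mid\psi\To\varphi$ on which the communication proof of $\LIN$ rests: these have distinct antecedents, while any output of $\left(\rs\right)$ shares its antecedent (and the shared part can never be deleted, since $\HLJ'$ has no left-thinning in reverse and $\To\varphi,\psi$ is not $\INT$-valid). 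The idea that resolves this is to feed the split a common antecedent that already carries the disjunctive information, namely $\varphi\lor\psi$ (and, in the quantified case, $\forall x\left(\varphi\lor\psi\left(x\right)\right)$).

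For (1), I would first derive $\varphi\lor\psi\To\varphi,\psi$ from the axioms $\varphi\To\varphi$ and $\psi\To\psi$ using $\left(\mathrm{iw\mhyphen R}\right)$ and $\left(\mathrm{\lor\mhyphen L}\right)$. Applying $\left(\rs\right)$ splits this into $\varphi\lor\psi\To\varphi\mid\varphi\lor\psi\To\psi$. Now $\left(\mathrm{\to\mhyphen R'}\right)$, applied to each component (interleaved with $\left(\mathrm{ee}\right)$) so as to move the whole antecedent to the right, yields $\To\left(\varphi\lor\psi\right)\to\varphi\mid\To\left(\varphi\lor\psi\right)\to\psi$; a disjunction-right rule in each component (choosing the appropriate disjunct) followed by $\left(\mathrm{ec}\right)$ then gives $\To\chi$, where $\chi:=\left(\left(\varphi\lor\psi\right)\to\varphi\right)\lor\left(\left(\varphi\lor\psi\right)\to\psi\right)$. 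Finally, since $\left(\varphi\lor\psi\right)\to\varphi$ intuitionistically entails $\psi\to\varphi$ and $\left(\varphi\lor\psi\right)\to\psi$ entails $\varphi\to\psi$, the sequent $\chi\To\LIN$ is $\INT$-valid and hence already derivable in $\HLJ'$; one application of $\left(\mathrm{cut}\right)$ on $\chi$ produces $\To\LIN$.

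For (2) the same device works even more smoothly, because the disjunctive antecedent is now supplied for free by $\ACD$ itself. Writing $\Gamma:=\forall x\left(\varphi\lor\psi\left(x\right)\right)$, I would derive $\varphi\lor\psi\left(x\right)\To\varphi,\psi\left(x\right)$ as above and apply $\left(\mathrm{\forall\mhyphen L}\right)$ (instantiating with $x$ itself) to obtain $\Gamma\To\varphi,\psi\left(x\right)$. Then $\left(\rs\right)$ gives $\Gamma\To\varphi\mid\Gamma\To\psi\left(x\right)$, and $\left(\mathrm{\forall\mhyphen R_{ms}}\right)$ applied to the second component — legitimate since $x$ occurs free neither in $\Gamma$ nor in $\varphi$, hence not in the lower hypersequent — yields $\Gamma\To\varphi\mid\Gamma\To\forall x\psi\left(x\right)$. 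A disjunction-right rule in each component followed by $\left(\mathrm{ec}\right)$ gives $\Gamma\To\varphi\lor\forall x\psi\left(x\right)$, and a closing $\left(\mathrm{\to\mhyphen R'}\right)$ moving $\Gamma$ to the right produces exactly $\To\ACD$. No cut is required here: because the split succedents $\varphi$ and $\psi\left(x\right)$ recombine verbatim into $\varphi\lor\forall x\psi\left(x\right)$, the derivation lands on $\ACD$ on the nose.

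The one genuinely non-routine point is the opening observation: recognising that the antecedent-sharing behaviour of $\left(\rs\right)$ obstructs a direct imitation of $\left(\com\right)$, and that the remedy is to pre-load a disjunction as the common antecedent so that $\left(\rs\right)$ followed by $\left(\mathrm{\to\mhyphen R'}\right)$ (resp. $\left(\mathrm{\forall\mhyphen R_{ms}}\right)$) manufactures the required implications. Everything after the split is bookkeeping with the structural and logical rules of $\HLJ'$. The only asymmetry between the two parts is that for $\LIN$ an extra $\INT$-level $\left(\mathrm{cut}\right)$ is needed to rewrite $\left(\varphi\lor\psi\right)\to\varphi$ and $\left(\varphi\lor\psi\right)\to\psi$ into the literal disjuncts of $\LIN$, whereas for $\ACD$ the target shape is already matched and the derivation is cut-free.
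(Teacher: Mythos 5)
Your proposal is correct, and its core is the same as the paper's: pre-load the disjunction as the shared antecedent, derive $\varphi\lor\psi\To\varphi,\psi$ (resp.\ $\forall x\left(\varphi\lor\psi\left(x\right)\right)\To\varphi,\psi\left(x\right)$ via $\left(\mathrm{\forall\mhyphen L}\right)$), and split the succedent with $\left(\rs\right)$. For part (2) your derivation coincides with the paper's step for step: $\left(\rs\right)$, then $\left(\mathrm{\forall\mhyphen R_{ms}}\right)$ on the component $\forall x\left(\varphi\lor\psi\left(x\right)\right)\To\psi\left(x\right)$, then $\left(\mathrm{\lor\mhyphen R}\right)$ in each component, $\left(\mathrm{ec}\right)$, and $\left(\mathrm{\to\mhyphen R'}\right)$, cut-free. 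For part (1) only the endgame differs: the paper cuts the two split components against $\psi\To\varphi\lor\psi$ and $\varphi\To\varphi\lor\psi$ at the hypersequent level, thereby recovering exactly $\varphi\To\psi\mid\psi\To\varphi$ and finishing as in the communication-rule proof, whereas you fold each component into an implication with $\left(\mathrm{\to\mhyphen R'}\right)$, contract to the single component $\To\chi$ with $\chi=\left(\left(\varphi\lor\psi\right)\to\varphi\right)\lor\left(\left(\varphi\lor\psi\right)\to\psi\right)$, and apply one cut against $\chi\To\LIN$. Both endgames use cut, so neither buys anything over the other in terms of cut-freeness; yours trades two small hypersequent-level cuts for one cut on a larger formula plus an appeal to completeness of $\HLJ'$ for $\INT$ (note that the cited completeness fact is stated for valid \emph{formulae}, not sequents, so you should either derive $\chi\To\LIN$ directly---which is easy---or pass from $\To\chi\to\LIN$ to $\chi\To\LIN$ by a routine $\left(\mathrm{\to\mhyphen L}\right)$-plus-cut step). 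One motivational claim in your opening is wrong, though it does not affect your derivation: the pair $\varphi\To\psi\mid\psi\To\varphi$ \emph{is} reachable in $\HLJ'+\left(\rs\right)$---the paper reaches it by exactly the cuts just described; what is true is only that it cannot be obtained by a direct, cut-free imitation of $\left(\com\right)$.
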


\begin{proof}
The linearity axiom:
\[
\infer[\left(\mathrm{ec}\right)]{\To\left(\varphi\to\psi\right)\vee\left(\psi\to\varphi\right)}{\infer=[\left(\mathrm{\to\mhyphen R'}\right),\left(\mathrm{\lor\mhyphen R}\right),\left(\mathrm{ee}\right)]{\To\left(\varphi\to\psi\right)\lor\left(\psi\to\varphi\right)\mid\To\left(\varphi\to\psi\right)\lor\left(\psi\to\varphi\right)}{\infer[\left(\mathrm{cut}\right)]{\varphi\To\psi\mid\psi\To\varphi}{\infer[\left(\mathrm{\lor_{1}\mhyphen R}\right)]{\varphi\To\varphi\lor\psi}{\infer[\left(\mathrm{Id}\right)]{\varphi\To\varphi}{}}\amp\infer[\left(\mathrm{ee}\right)]{\varphi\lor\psi\To\psi\mid\psi\To\varphi}{\infer[\left(\mathrm{cut}\right)]{\psi\To\varphi\mid\varphi\lor\psi\To\psi}{\infer[\left(\mathrm{\lor_{2}\mhyphen R}\right)]{\psi\To\varphi\lor\psi}{\infer[\left(\mathrm{Id}\right)]{\psi\To\psi}{}}\amp\infer[\left(\rs\right)]{\varphi\lor\psi\To\varphi\mid\varphi\lor\psi\To\psi}{\infer[\left(\mathrm{\lor\mhyphen L}\right)]{\varphi\lor\psi\To\varphi,\psi}{\infer[\left(\mathrm{iw\mhyphen R}\right)]{\varphi\To\varphi,\psi}{\infer[\left(\mathrm{Id}\right)]{\varphi\To\varphi}{}}\amp\infer=[\left(\mathrm{iw\mhyphen R}\right)\left(\mathrm{ie\mhyphen R}\right)]{\psi\To\varphi,\psi}{\infer[\left(\mathrm{Id}\right)]{\psi\To\psi}{}}}}}}}}}
\]
The constant domain axiom:
\[
\infer[\left(\mathrm{\to\mhyphen R'}\right)]{\To\forall x\left(\varphi\lor\psi\left(x\right)\right)\to\varphi\lor\forall x\psi\left(x\right)}{\infer[\left(\mathrm{ec}\right)]{\forall x\left(\varphi\lor\psi\left(x\right)\right)\To\varphi\lor\forall x\psi\left(x\right)}{\infer=[\left(\mathrm{\lor\mhyphen R}\right),\left(\mathrm{ee}\right)]{\forall x\left(\varphi\lor\psi\left(x\right)\right)\To\varphi\lor\forall x\psi\left(x\right)\mid\forall x\left(\varphi\lor\psi\left(x\right)\right)\To\varphi\lor\forall x\psi\left(x\right)}{\infer[\left(\mathrm{\forall\mhyphen R_{ms}}\right)]{\forall x\left(\varphi\lor\psi\left(x\right)\right)\To\varphi\mid\forall x\left(\varphi\lor\psi\left(x\right)\right)\To\forall x\psi\left(x\right)}{\infer[\left(\rs\right)]{\forall x\left(\varphi\lor\psi\left(x\right)\right)\To\varphi\mid\forall x\left(\varphi\lor\psi\left(x\right)\right)\To\psi\left(x\right)}{\infer[\left(\mathrm{\forall\mhyphen L}\right)]{\forall x\left(\varphi\lor\psi\left(x\right)\right)\To\varphi,\psi\left(x\right)}{\infer[\left(\mathrm{\lor\mhyphen L}\right)]{\varphi\lor\psi\left(x\right)\To\varphi,\psi\left(x\right)}{\infer[\left(\mathrm{iw\mhyphen R}\right)]{\varphi\To\varphi,\psi\left(x\right)}{\infer[\left(\mathrm{Id}\right)]{\varphi\To\varphi}{}}\amp\infer=[\left(\mathrm{iw\mhyphen R}\right)\left(\mathrm{ie\mhyphen R}\right)]{\psi\left(x\right)\To\varphi,\psi\left(x\right)}{\infer[\left(\mathrm{Id}\right)]{\psi\left(x\right)\To\psi\left(x\right)}{}}}}}}}}}\qedhere
\]
\end{proof}
\begin{cor}[Completeness]
\begin{enumerate}
\item If $\varphi$ is $\GD$-valid, then $\HLJ'+\left(\rs\right)\vdash\To\varphi$.
\item If $\varphi$ is $\forall\GD$-valid, then $\forall\HLJ'+\left(\mathrm{\forall\mhyphen R_{ms}}\right)+\left(\rs\right)\vdash\To\varphi$.
\end{enumerate}
\end{cor}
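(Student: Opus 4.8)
The plan is to transfer the completeness of the base calculi, recorded in the Facts above, through the Theorem just proved, gluing the pieces together with $(\mathrm{cut})$. For \emph{(1)}, recall that $\GD=\INT+\LIN$ as a Hilbert system, so that if $\varphi$ is $\GD$-valid there are finitely many instances $\lambda_{1},\ldots,\lambda_{k}$ of $\LIN$ with $\lambda_{1},\ldots,\lambda_{k}\vdash_{\INT}\varphi$; by the intuitionistic deduction theorem this means $\bigwedge_{j=1}^{k}\lambda_{j}\to\varphi$ is $\INT$-valid. The $\INT$-completeness of $\HLJ'$ recorded above then yields $\HLJ'\vdash\To\bigwedge_{j}\lambda_{j}\to\varphi$, while part \emph{(1)} of the Theorem together with $(\mathrm{\land\mhyphen R})$ gives $\HLJ'+(\rs)\vdash\To\bigwedge_{j}\lambda_{j}$.

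It then remains to combine these two derivations into a proof of $\To\varphi$, which I would do by observing that modus ponens is available via $(\mathrm{cut})$: from $(\mathrm{Id})$ and $(\mathrm{\to\mhyphen L})$ one builds $\bigwedge_{j}\lambda_{j}\to\varphi,\bigwedge_{j}\lambda_{j}\To\varphi$, cuts this against $\To\bigwedge_{j}\lambda_{j}\to\varphi$ to obtain $\bigwedge_{j}\lambda_{j}\To\varphi$, and cuts once more against $\To\bigwedge_{j}\lambda_{j}$ to reach $\To\varphi$. Every step lies in $\HLJ'+(\rs)$, which settles \emph{(1)}.

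For \emph{(2)} the same strategy applies, now using $\forall\GD=\forall\INT+\LIN+\ACD$: given a $\forall\GD$-valid $\varphi$, the deduction theorem for $\forall\INT$ supplies finitely many instances of $\LIN$ and of $\ACD$ whose conjunction $\chi$ makes $\chi\to\varphi$ be $\forall\INT$-valid, so $\forall\HLJ'\vdash\To\chi\to\varphi$ by \prettyref{fact:HLJ1-is-INT1}. By part \emph{(1)} of this corollary together with part \emph{(2)} of the Theorem, each conjunct of $\chi$ is provable in $\forall\HLJ'+(\mathrm{\forall\mhyphen R_{ms}})+(\rs)$, so $(\mathrm{\land\mhyphen R})$ assembles $\To\chi$, and two cuts as above deliver $\To\varphi$.

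The one point that needs care — and what I expect to be the main obstacle — is the passage, in the predicate case, from the \emph{open} axiom instances that the Theorem actually derives to the \emph{closed} instances (their universal closures) that the deduction theorem wants to discharge as hypotheses, since only for sentences does that theorem apply without a free-variable side condition. I would bridge this gap by applying $(\mathrm{\forall\mhyphen R_{ss}})$ to each derivation $\To\theta(\vec{w})$ furnished by the Theorem: as the antecedent is empty the eigenvariable condition is vacuous, so $\To\forall\vec{w}\,\theta(\vec{w})$ is again provable, and it is these universally closed instances that feed the cuts. Once this is in place, the combination described above carries over unchanged.
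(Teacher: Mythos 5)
Your proof is correct and fills in precisely the argument the paper leaves implicit: the corollary carries no proof because it is meant to follow at once from the Theorem (which derives $\LIN$ and $\ACD$ schematically), the completeness Facts for $\HLJ'$ and $\forall\HLJ'$, and modus ponens simulated by $(\mathrm{cut})$ — exactly the transfer you describe, including the universal-closure step via $\left(\mathrm{\forall\mhyphen R_{ss}}\right)$, which is indeed the right way to handle open axiom instances in the predicate case. One cosmetic correction: in part (2), the predicate instances of $\LIN$ are not covered by part (1) of the corollary (which concerns propositional $\GD$-validity); instead invoke Theorem part (1) read schematically, whose proof figure is valid for arbitrary predicate formulae inside $\forall\HLJ'+\left(\mathrm{\forall\mhyphen R_{ms}}\right)+\left(\rs\right)$.
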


\begin{thm}[Soundness]
\begin{enumerate}
\item If $\HLJ'+\left(\rs\right)\vdash\Gamma_{1}\To\Delta_{1}\mid\cdots\mid\Gamma_{n}\To\Delta_{n}$,
then $\bigvee_{i=1}^{n}\left(\bigwedge\Gamma_{i}\to\bigvee\Delta_{i}\right)$
is $\GD$-valid.
\item If $\forall\HLJ'+\left(\mathrm{\forall\mhyphen R_{ms}}\right)+\left(\rs\right)\vdash\Gamma_{1}\To\Delta_{1}\mid\cdots\mid\Gamma_{n}\To\Delta_{n}$,
then the universal closure of $\bigvee_{i=1}^{n}\left(\bigwedge\Gamma_{i}\to\bigvee\Delta_{i}\right)$
is $\forall\GD$-valid.
\end{enumerate}
\end{thm}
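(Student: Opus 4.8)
The plan is to argue by induction on the height of the derivation in the algebraic (equivalently Kripke) semantics of $\GD$ and $\forall\GD$, isolating $\left(\rs\right)$ as the only genuinely new rule to verify. For part~1 the system $\HLJ'+\left(\rs\right)$ uses exactly the rules of $\HLJ'+\left(\com\right)$ with $\left(\com\right)$ replaced by $\left(\rs\right)$, and for part~2 the system $\forall\HLJ'+\left(\mathrm{\forall\mhyphen R_{ms}}\right)+\left(\rs\right)$ replaces $\left(\com\right)$ by $\left(\rs\right)$ in the system of \prettyref{fact:HLJ1+com-is-GD1}. The soundness directions of \prettyref{cor:HLJ+com-is-GD} and \prettyref{fact:HLJ1+com-is-GD1} are obtained by the standard induction in which each rule is shown to preserve $\GD$- resp.\ $\forall\GD$-validity of the associated formula; these per-rule verifications apply unchanged to every rule our systems share with those, so it suffices to show that $\left(\rs\right)$ preserves validity.

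Concretely I would read $\GD$-validity as taking the top value $\top$ under every assignment in every \emph{G\"odel chain} (linearly ordered Heyting algebra), and $\forall\GD$-validity as taking value $\top$ in every complete G\"odel chain $C$ equipped with a constant domain $D$, interpreting $\forall$ and $\exists$ as infimum and supremum over $D$ (the semantics with respect to which $\forall\GD$ is sound and complete, as used in \prettyref{fact:HLJ1+com-is-GD1}). In any such interpretation a formula denotes an element of $C$, a component $\Gamma\To\Delta$ denotes $\bigwedge\Gamma\to\bigvee\Delta$, and a hypersequent denotes the join of its components, so that value $\top$ of the hypersequent is precisely validity of $\bigvee_{i}\left(\bigwedge\Gamma_{i}\to\bigvee\Delta_{i}\right)$ (resp.\ of its universal closure).

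The rule $\left(\rs\right)$ is then immediate: in any chain one has the identity
\[
\gamma\to\left(\delta_{1}\vee\delta_{2}\right)=\left(\gamma\to\delta_{1}\right)\vee\left(\gamma\to\delta_{2}\right),
\]
the algebraic form of $\LIN$ recorded before the theorem (it holds because $\delta_{1}\vee\delta_{2}$ is $\max\left(\delta_{1},\delta_{2}\right)$ in a chain). Joining both sides with the value of $G$ shows that the premise and the conclusion of $\left(\rs\right)$ denote the \emph{same} element, so validity is preserved (indeed reflected). The predicate rule that genuinely uses the constant domain is $\left(\mathrm{\forall\mhyphen R_{ms}}\right)$, already covered by \prettyref{fact:HLJ1+com-is-GD1}; the mechanism is that in a chain $a\vee b=\top$ forces $a=\top$ or $b=\top$, so if $\left(\gamma\to\varphi\left[d/x\right]\right)\vee g=\top$ for all $d\in D$ while $g\neq\top$, then $\gamma\leq\varphi\left[d/x\right]$ for every $d$, whence $\gamma\leq\inf_{d\in D}\varphi\left[d/x\right]=\forall x\varphi$ and the conclusion is valid.

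The hard part will not be this computation but setting up the part~2 semantics so that the two side conditions hold simultaneously: $\left(\rs\right)$ requires the truth-value algebra to be linear, whereas $\left(\mathrm{\forall\mhyphen R_{ms}}\right)$ requires constant domains together with the identification $\forall x\varphi=\inf_{d\in D}\varphi\left[d/x\right]$. I would justify this by passing to the Kripke models for $\forall\GD$ and checking that the algebra of upward-closed sets of a linearly ordered frame is a complete chain and that, under constant domains, $\forall$ and $\exists$ compute there as infima and suprema; the eigenvariable conditions on $\left(\mathrm{\forall\mhyphen R_{ms}}\right)$ and on the $\forall\HLJ'$ quantifier rules must also be tracked so that the universally closed eigenvariable ranges over all of $D$. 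With this framework in place both inductions close.
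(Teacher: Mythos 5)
Your proposal is correct, but it reaches the key verification by a genuinely different route from the paper. Both arguments share the same skeleton: induction on the derivation, with every rule that the system shares with $\HLJ'+\left(\com\right)$ (resp.\ with the system of \prettyref{fact:HLJ1+com-is-GD1}) handled by the per-rule verifications already underlying the soundness halves of the facts recalled in the introduction, so that only $\left(\rs\right)$ needs a new argument. You discharge $\left(\rs\right)$ semantically, via the identity $\gamma\to\left(\delta_{1}\vee\delta_{2}\right)=\left(\gamma\to\delta_{1}\right)\vee\left(\gamma\to\delta_{2}\right)$ in G\"odel chains, which even shows that premise and conclusion of $\left(\rs\right)$ denote the same element. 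The paper instead discharges it syntactically: it exhibits an explicit natural-deduction derivation of the characteristic formula $\left(\gamma\to\delta_{1}\vee\delta_{2}\right)\to\left(\gamma\to\delta_{1}\right)\vee\left(\gamma\to\delta_{2}\right)$ in $\left(\forall\right)\NJ+\LIN$; since $\left(\forall\right)\NJ+\LIN$ is contained in both $\GD$ and $\forall\GD$, one derivation settles both parts at once. The trade-off is real. Your algebraic computation is conceptually transparent and yields the stronger observation that $\left(\rs\right)$ preserves \emph{and reflects} validity; but, as you partly acknowledge, it must import the adequacy of the chain (or linear constant-domain Kripke) semantics, and if ``$\forall\GD$-valid'' is read as derivability in $\forall\INT+\LIN+\ACD$, your argument silently invokes the nontrivial first-order completeness theorem for that semantics (Horn/Takano, the theorem behind \prettyref{fact:HLJ1+com-is-GD1}), together with the bookkeeping you describe for $\left(\mathrm{\forall\mhyphen R_{ms}}\right)$, eigenvariables, and universal closures. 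The paper's derivation is more elementary and self-contained: it stays entirely on the syntactic side of validity, needs no model theory beyond what the cited facts already provide, and makes visible that the $\left(\rs\right)$-implication requires only $\LIN$ and not $\ACD$.
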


\begin{proof}
It suffices to show that $\left(\forall\right)\NJ+\LIN\vdash\left(\gamma\to\delta_{1}\vee\delta_{2}\right)\to\left(\gamma\to\delta_{1}\right)\vee\left(\gamma\to\delta_{2}\right)$.\footnotesize
\[
\infer[9]{\left(\gamma\to\delta_{1}\lor\delta_{2}\right)\to\left(\gamma\to\delta_{1}\right)\lor\left(\gamma\to\delta_{2}\right)}{\infer[7,8]{\left(\gamma\to\delta_{1}\right)\lor\left(\gamma\to\delta_{2}\right)}{\infer{\left(\gamma\to\delta_{1}\right)\lor\left(\gamma\to\delta_{2}\right)}{\infer[3]{\gamma\to\delta_{2}}{\infer[1,2]{\delta_{1}}{\infer[1]{\delta_{2}}{}\amp\infer{\delta_{2}}{\infer[7]{\delta_{1}\to\delta_{2}}{}\amp\infer[2]{\delta_{1}}{}}\amp\infer{\delta_{1}\lor\delta_{2}}{\infer[9]{\gamma\to\delta_{1}\lor\delta_{2}}{}\amp\infer[3]{\gamma}{}}}}}\amp\infer{\left(\gamma\to\delta_{1}\right)\lor\left(\gamma\to\delta_{2}\right)}{\infer[6]{\gamma\to\delta_{1}}{\infer[4,5]{\delta_{1}}{\infer[4]{\delta_{1}}{}\amp\infer{\delta_{1}}{\infer[8]{\delta_{2}\to\delta_{1}}{}\amp\infer[5]{\delta_{2}}{}}\amp\infer{\delta_{1}\lor\delta_{2}}{\infer[9]{\gamma\to\delta_{1}\lor\delta_{2}}{}\amp\infer[6]{\gamma}{}}}}}\amp\infer[\left(\LIN\right)]{\left(\delta_{1}\to\delta_{2}\right)\lor\left(\delta_{2}\to\delta_{1}\right)}{}}}\normalsize\qedhere
\]
 
\end{proof}
As the dual form of the right split, one can consider the \emph{left
split rule}:
\[
\vcenter{\infer[\left(\ls\right)]{\Gamma_{1}\To\Delta\mid\Gamma_{2}\To\Delta\mid G}{\Gamma_{1},\Gamma_{2}\To\Delta\mid G}}.
\]
This rule corresponds to the inequality $\left(\gamma_{1}\land\gamma_{2}\right)\to\delta\leq\left(\gamma_{1}\to\delta\right)\lor\left(\gamma_{2}\to\delta\right)$,
which is equivalent to $\LIN$ (see \citet[Proposition 2]{DMKJ20}).
\begin{thm}
$\HLJ'+\left(\ls\right)$ proves the generalised De Morgan's law $\GDM\colon\left(\left(\gamma_{1}\land\gamma_{2}\right)\to\delta\right)\to\left(\gamma_{1}\to\delta\right)\lor\left(\gamma_{2}\to\delta\right)$.
\end{thm}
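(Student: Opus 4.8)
The plan is to produce an explicit derivation in $\HLJ'+\left(\ls\right)$, following the same top-down strategy used for $\LIN$ and $\ACD$ above but with the roles of $\land$ and $\lor$ adapted to $\GDM$. Write $\alpha:=\left(\gamma_{1}\land\gamma_{2}\right)\to\delta$ and $\beta:=\left(\gamma_{1}\to\delta\right)\lor\left(\gamma_{2}\to\delta\right)$, so the goal is $\To\alpha\to\beta$. First I would apply $\left(\mathrm{\to\mhyphen R'}\right)$ to reduce to $\alpha\To\beta$, then $\left(\mathrm{ec}\right)$ to pass to the two-component hypersequent $\alpha\To\beta\mid\alpha\To\beta$. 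In the left component I peel off the first disjunct of $\beta$ with $\left(\mathrm{\lor_{1}\mhyphen R}\right)$ and in the right component the second disjunct with $\left(\mathrm{\lor_{2}\mhyphen R}\right)$ (interleaving $\left(\mathrm{ee}\right)$ to expose the intended component), arriving at $\alpha\To\gamma_{1}\to\delta\mid\alpha\To\gamma_{2}\to\delta$. Applying $\left(\mathrm{\to\mhyphen R'}\right)$ in each component then yields
\[
\gamma_{1},\alpha\To\delta\mid\gamma_{2},\alpha\To\delta .
\]

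This is precisely the shape that $\left(\ls\right)$ is designed to collapse, since both components share the right-hand side $\delta$. The key step is to read this hypersequent as the conclusion of $\left(\ls\right)$ with $\Gamma_{1}=\gamma_{1},\alpha$ and $\Gamma_{2}=\gamma_{2},\alpha$, whose premise is $\gamma_{1},\alpha,\gamma_{2},\alpha\To\delta$. Using $\left(\mathrm{ic\mhyphen L}\right)$ together with internal exchanges this reduces to the antecedent-merged sequent
\[
\gamma_{1},\gamma_{2},\alpha\To\delta ,
\]
that is, $\gamma_{1},\gamma_{2},\left(\gamma_{1}\land\gamma_{2}\right)\to\delta\To\delta$.

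It then remains to derive this purely logical sequent in $\HLJ'$, with no split rule involved. I would apply $\left(\mathrm{\to\mhyphen L}\right)$ to $\alpha$, splitting into the left premise $\gamma_{1},\gamma_{2}\To\delta,\gamma_{1}\land\gamma_{2}$ and the right premise $\delta,\gamma_{1},\gamma_{2}\To\delta$; the latter is $\left(\mathrm{Id}\right)$ followed by internal weakenings, and the former follows by $\left(\mathrm{\land\mhyphen R}\right)$, whose two premises $\gamma_{1},\gamma_{2}\To\delta,\gamma_{i}$ are again $\left(\mathrm{Id}\right)$ plus internal weakenings and exchanges. Assembling these blocks from top to bottom gives the required proof figure. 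Note that the multi-conclusion sequents appearing here are legitimate in $\HLJ'$, since only $\left(\mathrm{\to\mhyphen R}\right)$, and not $\left(\mathrm{\to\mhyphen L}\right)$ or $\left(\mathrm{\land\mhyphen R}\right)$, is restricted.

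The only genuine obstacle, and it is one of bookkeeping rather than difficulty, is the shared occurrence of $\alpha$ in both components after the $\left(\mathrm{\to\mhyphen R'}\right)$ step: because $\left(\ls\right)$ splits a \emph{single} merged antecedent $\Gamma_{1},\Gamma_{2}$ and carries no common context, the antecedent $\alpha$ must be duplicated. One either carries two copies of $\alpha$ into the $\left(\ls\right)$ premise and contracts with $\left(\mathrm{ic\mhyphen L}\right)$ as above, or equivalently applies $\left(\ls\right)$ to $\gamma_{1},\gamma_{2},\alpha\To\delta$, keeping $\alpha$ in one component and restoring it in the other by $\left(\mathrm{iw\mhyphen L}\right)$ together with $\left(\mathrm{ee}\right)$. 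Either route is routine; the substantive content lies entirely in the $\left(\mathrm{\to\mhyphen L}\right)/\left(\mathrm{\land\mhyphen R}\right)$ block and in recognising that $\left(\ls\right)$ supplies exactly the antecedent-splitting needed to separate $\gamma_{1}$ from $\gamma_{2}$.
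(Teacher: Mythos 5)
Your derivation coincides with the paper's own proof essentially step for step: the same backward decomposition via $\left(\mathrm{\to\mhyphen R'}\right)$, $\left(\mathrm{ec}\right)$, $\left(\mathrm{\lor_{i}\mhyphen R}\right)$, $\left(\mathrm{\to\mhyphen R'}\right)$, the same application of $\left(\ls\right)$ with $\Gamma_{1}=\gamma_{1},\alpha$ and $\Gamma_{2}=\gamma_{2},\alpha$, and the same $\left(\mathrm{\to\mhyphen L}\right)$/$\left(\mathrm{\land\mhyphen R}\right)$ block at the top (where you are in fact slightly more careful than the paper's figure in carrying the side formula $\delta$ through $\left(\mathrm{\to\mhyphen L}\right)$). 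The one slip is a rule name at the duplication step: passing between $\gamma_{1},\gamma_{2},\alpha\To\delta$ and the $\left(\ls\right)$ premise $\gamma_{1},\alpha,\gamma_{2},\alpha\To\delta$ is licensed by internal weakening $\left(\mathrm{iw\mhyphen L}\right)$ together with $\left(\mathrm{ie\mhyphen L}\right)$, not by contraction $\left(\mathrm{ic\mhyphen L}\right)$ --- read downward the extra copy of $\alpha$ is \emph{introduced}, and contraction only licenses the opposite passage, from two copies down to one. Your alternative route in the closing paragraph, applying $\left(\ls\right)$ directly to $\gamma_{1},\gamma_{2},\alpha\To\delta$ and restoring $\alpha$ in the other component by $\left(\mathrm{iw\mhyphen L}\right)$ and $\left(\mathrm{ee}\right)$, is a correct repair, and the duplicate-then-weaken variant is exactly what the paper does, annotating that step $\left(\mathrm{iw\mhyphen L}\right)\left(\mathrm{ie\mhyphen L}\right)$.
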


\begin{proof}
\[
\infer[\left(\mathrm{\to\mhyphen R'}\right)]{\To\left(\gamma_{1}\wedge\gamma_{2}\to\delta\right)\to\left(\gamma_{1}\to\delta\right)\vee\left(\gamma_{2}\to\delta\right)}{\infer[\left(\mathrm{ec}\right)]{\gamma_{1}\wedge\gamma_{2}\to\delta\To\left(\gamma_{1}\to\delta\right)\vee\left(\gamma_{2}\to\delta\right)}{\infer=[\left(\mathrm{\lor\mhyphen R}\right),\left(\mathrm{ee}\right)]{\gamma_{1}\wedge\gamma_{2}\to\delta\To\left(\gamma_{1}\to\delta\right)\vee\left(\gamma_{2}\to\delta\right)\mid\gamma_{1}\wedge\gamma_{2}\to\delta\To\left(\gamma_{1}\to\delta\right)\vee\left(\gamma_{2}\to\delta\right)}{\infer=[\left(\mathrm{\to\mhyphen R'}\right),\left(\mathrm{ee}\right)]{\gamma_{1}\wedge\gamma_{2}\to\delta\To\gamma_{1}\to\delta\mid\gamma_{1}\wedge\gamma_{2}\to\delta\To\gamma_{2}\to\delta}{\infer[\left(\ls\right)]{\gamma_{1},\gamma_{1}\wedge\gamma_{2}\to\delta\To\delta\mid\gamma_{2},\gamma_{1}\wedge\gamma_{2}\to\delta\To\delta}{\infer=[\left(\mathrm{iw\mhyphen L}\right)\left(\mathrm{ie\mhyphen L}\right)]{\gamma_{1},\gamma_{1}\wedge\gamma_{2}\to\delta,\gamma_{2},\gamma_{1}\wedge\gamma_{2}\to\delta\To\delta}{\infer[\left(\mathrm{\to\mhyphen L}\right)]{\gamma_{1}\wedge\gamma_{2}\to\delta,\gamma_{1},\gamma_{2}\To\delta}{\infer[\left(\mathrm{\land\mhyphen R}\right)]{\gamma_{1},\gamma_{2}\To\gamma_{1}\wedge\gamma_{2}}{\infer=[\left(\mathrm{iw\mhyphen L}\right)\left(\mathrm{ie\mhyphen L}\right)]{\gamma_{1},\gamma_{2}\To\gamma_{1}}{\infer[\left(\mathrm{Id}\right)]{\gamma_{1}\To\gamma_{1}}{}}\amp\infer[\left(\mathrm{iw\mhyphen L}\right)]{\gamma_{1},\gamma_{2}\To\gamma_{2}}{\infer[\left(\mathrm{Id}\right)]{\gamma_{2}\To\gamma_{2}}{}}}\amp\infer=[\left(\mathrm{iw\mhyphen L}\right)\left(\mathrm{ie\mhyphen L}\right)]{\delta,\gamma_{1},\gamma_{2}\To\delta}{\infer[\left(\mathrm{Id}\right)]{\delta\To\delta}{}}}}}}}}}\qedhere
\]
\end{proof}
\begin{cor}[Completeness]
\begin{enumerate}
\item If $\varphi$ is $\GD$-valid, then $\HLJ'+\left(\ls\right)\vdash\To\varphi$.
\item If $\varphi$ is $\forall\INT+\LIN$-valid, then $\forall\HLJ'+\left(\mathrm{\forall\mhyphen R_{ms}}\right)+\left(\ls\right)\vdash\To\varphi$.
\end{enumerate}
\end{cor}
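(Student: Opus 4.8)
The plan is to obtain completeness from the theorem just proved by a routine Hilbert-style simulation, exactly as the right-split corollary follows from its theorem. Both target logics are axiomatised over an intuitionistic base by the schema $\LIN$: we have $\GD=\INT+\LIN$, and $\forall\INT+\LIN$ is intuitionistic predicate logic together with $\LIN$. Since $\HLJ'$ and $\forall\HLJ'$ are already complete with respect to $\INT$ and $\forall\INT$ respectively (\prettyref{fact:HLJ1-is-INT1}), and both retain $(\mathrm{cut})$, $(\mathrm{Id})$, internal weakening, the implication rules, and (in the predicate case) $(\mathrm{\forall\mhyphen R_{ss}})$, it suffices to establish two facts: (a) the enriched calculus derives $\To\chi$ for every instance $\chi$ of $\LIN$; and (b) the set of formulae $\vartheta$ with $\To\vartheta$ derivable is closed under the rules of the Hilbert calculus, so that it contains every theorem of the target logic.

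For (a) I would invoke the theorem just proved, which gives $\HLJ'+(\ls)\vdash\To\GDM$ for every instance of $\GDM$. Over $\INT$ the schema $\GDM$ entails $\LIN$: instantiating $\GDM$ with $\gamma_1:=\varphi$, $\gamma_2:=\psi$ and $\delta:=\varphi\land\psi$ yields $((\varphi\land\psi)\to(\varphi\land\psi))\to((\varphi\to(\varphi\land\psi))\lor(\psi\to(\varphi\land\psi)))$, whose antecedent is intuitionistically valid and whose succedent is intuitionistically equivalent to $\LIN$ via $(\varphi\to\varphi\land\psi)\leftrightarrow(\varphi\to\psi)$ and $(\psi\to\varphi\land\psi)\leftrightarrow(\psi\to\varphi)$. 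Hence the implication from this $\GDM$-instance to the corresponding $\LIN$-instance is $\INT$-valid, so it is provable in $\HLJ'$ by completeness, and cutting it against the $(\ls)$-proof of the $\GDM$-instance produces $\HLJ'+(\ls)\vdash\To\LIN$. The identical derivation inside $\forall\HLJ'+(\mathrm{\forall\mhyphen R_{ms}})+(\ls)$ supplies all $\LIN$-instances there. (That $\GDM$ and $\LIN$ are interderivable over $\INT$ is \citet[Proposition 2]{DMKJ20}.)

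For (b) I would check closure under the Hilbert rules. Modus ponens is simulated by $(\mathrm{cut})$: from $\To\vartheta$ and $\To\vartheta\to\rho$ one first builds $\vartheta\to\rho,\vartheta\To\rho$ using $(\mathrm{Id})$, $(\mathrm{iw\mhyphen L})$ and $(\mathrm{\to\mhyphen L})$, and then cuts successively against $\To\vartheta$ and $\To\vartheta\to\rho$ to reach $\To\rho$. In the predicate case, generalisation is simulated directly by $(\mathrm{\forall\mhyphen R_{ss}})$, which turns $\To\psi$ into $\To\forall x\psi$. Thus the theorems of the enriched calculus contain all $\INT$- (resp.\ $\forall\INT$-) theorems by \prettyref{fact:HLJ1-is-INT1}, contain every $\LIN$-instance by (a), and are closed under modus ponens and generalisation; since $\GD$ and $\forall\INT+\LIN$ are generated from their intuitionistic bases and the schema $\LIN$ by exactly these rules, every valid $\varphi$ is derivable. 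This yields both parts. Note that $(\mathrm{\forall\mhyphen R_{ms}})$ plays no role in part (2) and is present only as a component of the system under study.

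The only point requiring care is that both simulations stay within the single-conclusion format of $\HLJ'$ and $\forall\HLJ'$: the cut simulation of modus ponens and the rewriting from $\GDM$ to $\LIN$ must not covertly introduce multiple succedents or additional uses of the splitting rule beyond the single explicit one. Given cut and the intuitionistic completeness of the base calculi, this is purely a matter of bookkeeping, so I expect no genuine obstacle.
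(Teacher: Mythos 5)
Your proposal is correct and takes essentially the same route as the paper: the paper's entire proof is ``Trivial. Note that $\GDM$ implies $\LIN$ in $\left(\forall\right)\INT$,'' and your argument is exactly the elaboration of that remark --- derive $\GDM$ from the preceding theorem, convert it to $\LIN$ over the intuitionistic base using completeness of $\HLJ'$ (resp.\ $\forall\HLJ'$) and cut, then close under cut-simulated modus ponens and $\left(\mathrm{\forall\mhyphen R_{ss}}\right)$-simulated generalisation to capture the whole axiomatic extension. The only differences are matters of bookkeeping you already flag (and a minor citation slip: for the propositional case the relevant completeness statement is the unlabelled fact about $\HLJ'$, not \prettyref{fact:HLJ1-is-INT1}), so there is no substantive divergence from the paper.
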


\begin{proof}
Trivial. Note that $\GDM$ implies $\LIN$ in $\left(\forall\right)\INT$.
\end{proof}
\begin{thm}[Soundness]
\begin{enumerate}
\item If $\HLJ'+\left(\ls\right)\vdash\Gamma_{1}\To\Delta_{1}\mid\cdots\mid\Gamma_{n}\To\Delta_{n}$,
then $\bigvee_{i=1}^{n}\left(\bigwedge\Gamma_{i}\to\bigvee\Delta_{i}\right)$
is $\GD$-valid.
\item If $\forall\HLJ'+\left(\mathrm{\forall\mhyphen R_{ms}}\right)+\left(\ls\right)\vdash\Gamma_{1}\To\Delta_{1}\mid\cdots\mid\Gamma_{n}\To\Delta_{n}$,
the universal closure of $\bigvee_{i=1}^{n}\left(\bigwedge\Gamma_{i}\to\bigvee\Delta_{i}\right)$
is $\forall\GD$-valid.
\end{enumerate}
\end{thm}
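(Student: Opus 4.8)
The plan is to argue by induction on the height of the derivation, exactly as in the soundness proof for $\left(\rs\right)$. For both clauses, every rule other than $\left(\ls\right)$ already belongs to a system known to be sound for the target logic: each rule of $\left(\forall\right)\HLJ'$ preserves validity in \emph{every} intermediate (predicate) logic, since for each of them the interpretation of the conclusion is intuitionistically entailed by the interpretations of the premises, and such an entailment remains valid over $\GD$ and $\forall\GD$; and $\left(\mathrm{\forall\mhyphen R_{ms}}\right)$ is sound for $\forall\GD$ by the soundness part of \prettyref{fact:HLJ1+com-is-GD1}. Since clause (1) is just the propositional fragment of clause (2), it suffices to verify $\left(\ls\right)$ once, in the predicate setting.

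For the $\left(\ls\right)$ step, writing $\gamma_{i}:=\bigwedge\Gamma_{i}$ and $\delta:=\bigvee\Delta$, the premise $\Gamma_{1},\Gamma_{2}\To\Delta\mid G$ is interpreted (before universal closure) by $\left(\gamma_{1}\wedge\gamma_{2}\to\delta\right)\vee I\!\left(G\right)$ and the conclusion $\Gamma_{1}\To\Delta\mid\Gamma_{2}\To\Delta\mid G$ by $\left(\gamma_{1}\to\delta\right)\vee\left(\gamma_{2}\to\delta\right)\vee I\!\left(G\right)$, where $I\!\left(G\right)$ is the interpretation of the side hypersequent. Because $\left(A\vee C\right)\wedge\left(A\to B\right)\to\left(B\vee C\right)$ is $\INT$-valid, preservation of validity by $\left(\ls\right)$ reduces — after taking universal closures, which is harmless since $\left(\ls\right)$ introduces no quantifier and leaves the free variables unchanged — to the $\forall\GD$-validity of the single implication $\GDM\colon\left(\left(\gamma_{1}\wedge\gamma_{2}\right)\to\delta\right)\to\left(\gamma_{1}\to\delta\right)\vee\left(\gamma_{2}\to\delta\right)$. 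As $\GDM$ is propositional, it is enough to derive it in $\GD=\NJ+\LIN$, whence it is a fortiori $\forall\GD$-valid.

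It therefore remains to show $\left(\forall\right)\NJ+\LIN\vdash\GDM$, mirroring the $\left(\rs\right)$ figure but splitting on the two antecedents rather than the two succedents. Assuming $\left(\gamma_{1}\wedge\gamma_{2}\right)\to\delta$ and invoking $\LIN$ in the form $\left(\gamma_{1}\to\gamma_{2}\right)\vee\left(\gamma_{2}\to\gamma_{1}\right)$, the first disjunct lets us pass from a hypothesis $\gamma_{1}$ to $\gamma_{2}$, hence to $\gamma_{1}\wedge\gamma_{2}$ and to $\delta$, yielding $\gamma_{1}\to\delta$; the second disjunct symmetrically yields $\gamma_{2}\to\delta$; an $\vee$-elimination then gives $\GDM$. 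This is the $\LIN$-to-$\GDM$ half of the equivalence recorded in \citet[Proposition 2]{DMKJ20}.

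I do not expect a genuine obstacle: once the reduction to $\GDM$ is in place, the induction is routine. The only point deserving care is conceptual rather than technical — soundness here is asserted against the full $\forall\GD$, which contains $\ACD$, whereas the preceding completeness corollary only reaches $\forall\INT+\LIN$. The theorem thus records merely the upper bound $\forall\GD$, and I would resist any attempt to sharpen it to an exact characterisation, since pinning $\forall\HLJ'+\left(\mathrm{\forall\mhyphen R_{ms}}\right)+\left(\ls\right)$ down between $\forall\INT+\LIN$ and $\forall\GD$ is precisely the open question flagged in the introduction.
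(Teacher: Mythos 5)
Your proposal is correct and takes essentially the same route as the paper: the paper's own proof of this theorem is the one-liner that soundness is ``obvious from the well-known fact that $\left(\forall\right)\NJ+\LIN\vdash\GDM$'', i.e.\ exactly your reduction of the $\left(\ls\right)$ step to $\GDM$ and of $\GDM$ to $\LIN$. Your write-up merely spells out what the paper leaves implicit --- the routine induction over the derivation (with the non-$\left(\ls\right)$ rules handled by the earlier soundness facts) and an explicit natural-deduction derivation of $\GDM$ from $\LIN$ --- and your closing caveat about $\forall\GD$ being only an upper bound agrees with the open problem the paper states after this theorem.
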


\begin{proof}
Obvious from the well-known fact that $\left(\forall\right)\NJ+\LIN\vdash\GDM$.
\end{proof}
We have shown that $\HLJ'+\left(\ls\right)=\GD$ and $\forall\INT+\LIN\leq\forall\HLJ'+\left(\mathrm{\forall\mhyphen R_{ms}}\right)+\left(\ls\right)\leq\forall\GD$.
\begin{problem}
Decide the exact strength of $\forall\HLJ'+\left(\mathrm{\forall\mhyphen R_{ms}}\right)+\left(\ls\right)$.
\end{problem}

\section{\label{sec:Forall-rules}Restriction of $\forall$-right and $\exists$-left}

Recall that the proof of $\ACD$ in $\forall\HLJ+\left(\mathrm{\forall\mhyphen R_{ms}}\right)+\left(\com\right)$
(\prettyref{fact:HLJ1+com-is-GD1}) essentially uses the multi-component
single-conclusioned $\forall$-right rule $\left(\mathrm{\forall\mhyphen R_{ms}}\right)$.
\begin{thm}[Completeness]
If $\varphi$ is $\forall\INT+\LIN$-valid, then $\forall\HLJ+\left(\com\right)\vdash\To\varphi$
and $\forall\HLJ'+\left(\com\right)\vdash\To\varphi$.
\end{thm}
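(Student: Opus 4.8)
The plan is to derive the result from the completeness of $\forall\HLJ$ and $\forall\HLJ'$ with respect to $\forall\INT$ (\prettyref{fact:HLJ1-is-INT1}(3)), together with the observation that $\left(\com\right)$ already suffices to derive every instance of $\LIN$. The conceptually important point, which explains why $\left(\com\right)$ alone yields exactly $\forall\INT+\LIN$ and not $\forall\GD$, is that the required derivations use only the single-component rule $\left(\mathrm{\forall\mhyphen R_{ss}}\right)$ and never the multi-component rule $\left(\mathrm{\forall\mhyphen R_{ms}}\right)$ that was responsible for $\ACD$ in \prettyref{fact:HLJ1+com-is-GD1}.

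First I would invoke the standard deduction-theorem-style reduction for axiomatic extensions. Since $\forall\INT+\LIN$ is $\forall\INT$ augmented by the schema $\LIN$, a Hilbert derivation of $\varphi$ uses only finitely many instances $\lambda_1,\dots,\lambda_k$ of $\LIN$. Passing to their universal closures $\forall\vec{x}_j\lambda_j$ --- which are sentences, so that the generalisation rule may be applied freely above them --- one obtains $\forall\INT\vdash\bigwedge_{j=1}^{k}\forall\vec{x}_j\lambda_j\to\varphi$. Write $\Lambda:=\bigwedge_{j=1}^{k}\forall\vec{x}_j\lambda_j$.

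Next I would assemble a derivation of $\To\Lambda$ in $\forall\HLJ+\left(\com\right)$ (and identically in $\forall\HLJ'+\left(\com\right)$). The schematic figure establishing $\To\LIN$ recalled earlier is single-conclusioned and remains valid for arbitrary, possibly open, $\lambda_j$, so it gives $\To\lambda_j$ for each $j$; since the antecedent is empty, repeated use of $\left(\mathrm{\forall\mhyphen R_{ss}}\right)$ (whose eigenvariable condition is then vacuous) yields $\To\forall\vec{x}_j\lambda_j$, and $\left(\mathrm{\land\mhyphen R}\right)$ with the external structural rules combines these into $\To\Lambda$. Meanwhile, since $\Lambda\to\varphi$ is $\forall\INT$-valid, \prettyref{fact:HLJ1-is-INT1}(3) gives $\To\Lambda\to\varphi$; converting this to the sequent $\Lambda\To\varphi$ (either by invertibility of $\left(\mathrm{\to\mhyphen R'}\right)$, or by one $\left(\mathrm{\to\mhyphen L}\right)$ step against identity axioms followed by $\left(\mathrm{cut}\right)$) and cutting against $\To\Lambda$ produces $\To\varphi$. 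The argument is verbatim the same for $\forall\HLJ'+\left(\com\right)$, since both systems are $\forall\INT$-complete and the linearity figure is single-conclusioned, hence legal in each.

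The hard part will be the reduction lemma $\forall\INT\vdash\Lambda\to\varphi$: one must check that every application of generalisation in the Hilbert proof can be absorbed by universally closing the finitely many $\LIN$-instances, and, symmetrically, that no eigenvariable clash obstructs reintroducing those closures through $\left(\mathrm{\forall\mhyphen R_{ss}}\right)$ over an empty antecedent. Everything else --- the schematic linearity derivation, the single $\left(\mathrm{cut}\right)$, and the restriction to $\left(\mathrm{\forall\mhyphen R_{ss}}\right)$ --- is routine. A purely semantic route via a Kripke canonical model for $\forall\INT+\LIN$ is also available, but the syntactic reduction above is shorter and reuses \prettyref{fact:HLJ1-is-INT1} directly.
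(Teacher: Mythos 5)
Your proposal is correct and takes essentially the same route as the paper: the paper's own proof is a one-liner citing $\HLJ+\left(\com\right)\vdash\LIN$ and the inclusions $\HLJ\subseteq\forall\HLJ\subseteq\forall\HLJ'$, leaving as ``obvious'' precisely the simulation of Hilbert-style derivations that you spell out (deduction-theorem reduction to $\forall\INT\vdash\Lambda\to\varphi$, universal closures via $\left(\mathrm{\forall\mhyphen R_{ss}}\right)$ over empty antecedents, assembly of $\To\Lambda$ by $\left(\mathrm{\land\mhyphen R}\right)$, and discharge by $\left(\mathrm{cut}\right)$ using the $\forall\INT$-completeness of the base calculi). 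Your expanded version of these implicit steps is accurate, including the key observation that the $\LIN$ figure is schematic in open formulae and never invokes $\left(\mathrm{\forall\mhyphen R_{ms}}\right)$.
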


\begin{proof}
Obvious from $\HLJ+\left(\com\right)\vdash\LIN$ (\prettyref{cor:HLJ+com-is-GD})
and $\HLJ\subseteq\forall\HLJ\subseteq\forall\HLJ'$.
\end{proof}
\begin{thm}[Soundness]
\begin{enumerate}
\item If $\forall\HLJ+\left(\com\right)\vdash\Gamma_{1}\To\varphi_{1}\mid\cdots\mid\Gamma_{n}\To\varphi_{n}$,
the universal closure of $\bigvee_{i=1}^{n}\left(\bigwedge\Gamma_{i}\to\varphi_{i}\right)$
is $\forall\INT+\LIN$-valid.
\item If $\forall\HLJ'+\left(\com\right)\vdash\Gamma_{1}\To\Delta_{1}\mid\cdots\mid\Gamma_{n}\To\Delta_{n}$,
the universal closure of $\bigvee_{i=1}^{n}\left(\bigwedge\Gamma_{i}\to\bigvee\Delta_{i}\right)$
is $\forall\INT+\LIN$-valid.
\end{enumerate}
\end{thm}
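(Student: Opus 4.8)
The plan is to argue by induction on the height of a derivation, showing that every inference rule preserves $\forall\INT+\LIN$-validity of the universal closure of the formula interpretation $I\left(H\right):=\bigvee_{i=1}^{n}\left(\bigwedge\Gamma_{i}\to\bigvee\Delta_{i}\right)$ associated with a hypersequent $H$ (in part (1) each succedent $\Delta_{i}$ is a single formula $\varphi_{i}$, read as $\bigvee\Delta_{i}=\varphi_{i}$, and an empty succedent is read as $\bot$). The systems $\forall\HLJ+\left(\com\right)$ and $\forall\HLJ'+\left(\com\right)$ extend $\forall\HLJ$ and $\forall\HLJ'$ by the single rule $\left(\com\right)$. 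The soundness argument underlying \prettyref{fact:HLJ1-is-INT1} already establishes, for each of the remaining rules, that the conclusion interpretation is derivable from the premise interpretations within $\forall\INT$; since those derivations remain valid in the extension $\forall\INT+\LIN$, every rule other than $\left(\com\right)$ preserves $\forall\INT+\LIN$-validity. Hence only the case of $\left(\com\right)$ requires a genuinely new argument.

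For that case I would abbreviate $C=\bigwedge\Gamma$, $D=\bigwedge\Delta$, $C'=\bigwedge\Gamma'$, $D'=\bigwedge\Delta'$, $E=\bigvee\Theta$, $E'=\bigvee\Theta'$, and $g=I\left(G\right)$. The two premises interpret to $\left(C\wedge D\to E\right)\vee g$ and $\left(C'\wedge D'\to E'\right)\vee g$, and the conclusion to $\left(C\wedge D'\to E\right)\vee\left(C'\wedge D\to E'\right)\vee g$. It therefore suffices to show that $\left(\forall\right)\NJ+\LIN$ proves $\left[\left(C\wedge D\to E\right)\vee g\right]\wedge\left[\left(C'\wedge D'\to E'\right)\vee g\right]\to\left[\left(C\wedge D'\to E\right)\vee\left(C'\wedge D\to E'\right)\vee g\right]$. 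If the disjunct $g$ is available the conclusion is immediate, so assume $C\wedge D\to E$ and $C'\wedge D'\to E'$. Now invoke the instance $\left(D\to D'\right)\vee\left(D'\to D\right)$ of $\LIN$, with $D=\bigwedge\Delta$ and $D'=\bigwedge\Delta'$, and split on it: if $D'\to D$ then $C\wedge D'\to C\wedge D\to E$, which yields the left disjunct; if $D\to D'$ then $C'\wedge D\to C'\wedge D'\to E'$, which yields the right disjunct. Each step is intuitionistically valid, so the implication is provable in $\forall\INT+\LIN$.

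Finally I would dispatch the universal closures. Because $\left(\com\right)$ only redistributes formulae among components and leaves $G$ intact, the conclusion and the two premises have exactly the same free variables $\vec{x}$; the implication just established is then $\forall\INT+\LIN$-valid as an open formula, so its universal closure is too, and combining this with the universally closed premises (via the $\forall\INT$-valid principle $\forall\vec{x}P_{1}\wedge\forall\vec{x}P_{2}\to\forall\vec{x}\left(P_{1}\wedge P_{2}\right)$) delivers the universal closure of the conclusion interpretation. Part (1) is identical, with $E$ and $E'$ read as single formulae. The main obstacle is precisely the $\left(\com\right)$ step: one must notice that the contexts $\Delta,\Delta'$ exchanged by communication are exactly those governed by the $\LIN$-instance $\left(\bigwedge\Delta\to\bigwedge\Delta'\right)\vee\left(\bigwedge\Delta'\to\bigwedge\Delta\right)$, and that the case split on this disjunction sends each branch into one of the two conclusion components; everything else is bookkeeping inherited from \prettyref{fact:HLJ1-is-INT1}.
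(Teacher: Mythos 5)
Your proof is correct and takes essentially the same route as the paper: the paper's entire proof is the one-line remark that ``one can verify that all the inference rules are $\forall\INT+\LIN$-valid,'' and your induction on derivations, with every rule except $\left(\com\right)$ handled by the soundness argument for $\forall\INT$ and the $\left(\com\right)$ case discharged via the $\LIN$ instance $\left(\bigwedge\Delta\to\bigwedge\Delta'\right)\vee\left(\bigwedge\Delta'\to\bigwedge\Delta\right)$, is precisely the verification the paper leaves implicit. The only imprecision is your claim that each premise of $\left(\com\right)$ has exactly the same free variables as the conclusion (each premise may have fewer), but this is immaterial since vacuous universal quantifiers can be added harmlessly.
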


\begin{proof}
One can verify that all the inference rules are $\forall\INT+\LIN$-valid.
\end{proof}
\begin{cor}
The rule $\left(\mathrm{\forall\mhyphen R_{ms}}\right)$ is not derived
from $\forall\HLJ$ or $\forall\HLJ'$.
\end{cor}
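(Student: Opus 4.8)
The plan is to argue by contradiction, combining the preceding Soundness theorem with the explicit derivation of $\ACD$ recalled in \prettyref{fact:HLJ1+com-is-GD1}. Recall that a rule is \emph{derived} (derivable) from a calculus when each of its instances can be obtained from its premises using the remaining rules; in that case the rule can be added freely without enlarging the set of provable hypersequents. I treat $\forall\HLJ$ and $\forall\HLJ'$ simultaneously, since the reasoning is identical for both.

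Suppose, for contradiction, that $\left(\mathrm{\forall\mhyphen R_{ms}}\right)$ were derivable in $\forall\HLJ$. Then every application of it inside a proof in $\forall\HLJ+\left(\mathrm{\forall\mhyphen R_{ms}}\right)+\left(\com\right)$ could be replaced by a $\forall\HLJ$-derivation of its conclusion from its premise, so $\forall\HLJ+\left(\com\right)$ would prove exactly the same hypersequents as $\forall\HLJ+\left(\mathrm{\forall\mhyphen R_{ms}}\right)+\left(\com\right)$. By the derivation in \prettyref{fact:HLJ1+com-is-GD1}, the latter proves $\To\ACD$; hence $\forall\HLJ+\left(\com\right)\vdash\To\ACD$. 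Applying part (1) of the Soundness theorem above to the one-component hypersequent $\To\ACD$, we would conclude that $\ACD$ is $\forall\INT+\LIN$-valid. The same chain of reasoning, using part (2) of the Soundness theorem, handles $\forall\HLJ'$.

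It therefore remains to contradict this by showing that $\ACD$ is \emph{not} $\forall\INT+\LIN$-valid, which is the only genuinely semantic step. For this I would exhibit a linearly ordered Kripke model in which $\ACD$ fails: take the two-element chain $0<1$, assign the domain $\left\{a\right\}$ to $0$ and $\left\{a,b\right\}$ to $1$, force the propositional letter $\varphi$ only at $1$, force $\psi\left(a\right)$ at both worlds, and leave $\psi\left(b\right)$ unforced at $1$. One checks that $\forall x\left(\varphi\lor\psi\left(x\right)\right)$ holds at $0$ while $\varphi\lor\forall x\psi\left(x\right)$ fails at $0$, so this model refutes $\ACD$; since every linear frame validates $\LIN$, it is a model of $\forall\INT+\LIN$, and thus $\ACD$ is not $\forall\INT+\LIN$-valid. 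This contradicts the previous paragraph, establishing that $\left(\mathrm{\forall\mhyphen R_{ms}}\right)$ is derivable in neither $\forall\HLJ$ nor $\forall\HLJ'$. The rule-elimination bookkeeping is routine; the substantive input is exactly the standard independence of the constant domain axiom from linearity, witnessed by the increasing-domain model just described.
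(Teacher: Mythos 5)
Your proof is correct and follows essentially the same route as the paper: assume derivability, conclude via \prettyref{fact:HLJ1+com-is-GD1} that $\forall\HLJ+\left(\com\right)$ (resp.\ $\forall\HLJ'+\left(\com\right)$) proves $\To\ACD$, and derive a contradiction with the soundness theorem for $\forall\INT+\LIN$. The only difference is that you make explicit the standard increasing-domain Kripke countermodel witnessing that $\ACD$ is not $\forall\INT+\LIN$-valid, a fact the paper's one-line proof leaves implicit; your countermodel checks out.
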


\begin{proof}
Otherwise, $\forall\HLJ\left(\forall\HLJ'\right)$ with $\left(\com\right)$
proves $\ACD$ by \prettyref{fact:HLJ1+com-is-GD1}, a contradiction.
\end{proof}
The principal source of $\ACD$ is the rule $\left(\mathrm{\forall\mhyphen R_{ms}}\right)$;
however, the rule $\left(\mathrm{\forall\mhyphen R_{ms}}\right)$
does not imply $\ACD$ solely.
\begin{thm}[Soundness]
\label{thm:strong-soundness}
\begin{enumerate}
\item If $\forall\HLJ+\left(\mathrm{\forall\mhyphen R_{ms}}\right)+\left(\mathrm{\exists\mhyphen L_{m}}\right)\vdash\Gamma_{1}\To\varphi_{1}\mid\cdots\mid\Gamma_{n}\To\varphi_{n}$,
the universal closure of $\bigwedge\Gamma_{i}\to\varphi_{i}$ is $\forall\INT$-valid
for some $i$.
\item If $\forall\HLJ'+\left(\mathrm{\forall\mhyphen R_{ms}}\right)+\left(\mathrm{\exists\mhyphen L_{m}}\right)\vdash\Gamma_{1}\To\Delta_{1}\mid\cdots\mid\Gamma_{n}\To\Delta_{n}$,
the universal closure of $\bigwedge\Gamma_{i}\to\bigvee\Delta_{i}$
is $\forall\INT$-valid for some $i$.
\end{enumerate}
\end{thm}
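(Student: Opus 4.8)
The plan is to prove a stronger, componentwise statement by induction on the height of derivations, from which both parts follow. Call a hypersequent $\Gamma_{1}\To\Delta_{1}\mid\cdots\mid\Gamma_{n}\To\Delta_{n}$ \emph{good} if the universal closure of $\bigwedge\Gamma_{i}\to\bigvee\Delta_{i}$ is $\forall\INT$-valid for at least one index $i$. I would show that every hypersequent derivable in $\forall\HLJ'+\left(\mathrm{\forall\mhyphen R_{ms}}\right)+\left(\mathrm{\exists\mhyphen L_{m}}\right)$ is good; part (2) is then exactly this, and part (1) is its single-conclusion instance, where $\bigvee\Delta_{i}=\varphi_{i}$, using $\forall\HLJ\subseteq\forall\HLJ'$. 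The axioms $\left(\mathrm{Id}\right)$ and $\left(\mathrm{Bot}\right)$ are trivially good, so the argument reduces to verifying that each inference rule sends good premises to a good conclusion.

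For the one-premise rules the governing observation is a dichotomy on the \emph{witnessing} component supplied by the induction hypothesis. Each such rule rewrites a single distinguished (principal) component while copying every other component --- in particular the whole side hypersequent $G$ --- verbatim. If the witness is among the copied components it reappears unchanged in the conclusion, and goodness is immediate. If the witness is the principal component, then the step is a sound single-sequent inference of $\forall\INT$: antecedent weakening for $\left(\mathrm{iw\mhyphen L}\right)$ and $\left(\mathrm{\land_{i}\mhyphen L}\right)$, currying for $\left(\mathrm{\to\mhyphen R'}\right)$, term instantiation for $\left(\mathrm{\forall\mhyphen L}\right)$ and $\left(\mathrm{\exists\mhyphen R}\right)$, and so on. The eigenvariable rules $\left(\mathrm{\forall\mhyphen R_{ss}}\right)$, $\left(\mathrm{\exists\mhyphen L_{s}}\right)$, $\left(\mathrm{\forall\mhyphen R_{ms}}\right)$ and $\left(\mathrm{\exists\mhyphen L_{m}}\right)$ need slightly more care: their variable conditions forbid $x$ from occurring free in the entire lower hypersequent, so the copied components keep their closures intact, while for the principal component the $\forall\INT$-equivalence of $\forall x\left(\bigwedge\Gamma\to\varphi\right)$ with $\bigwedge\Gamma\to\forall x\varphi$ (and the dual $\exists$-elimination) holds precisely because $x$ is not free in $\Gamma$ (resp. $\Gamma,\Delta$).

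The two-premise rules $\left(\mathrm{cut}\right)$, $\left(\mathrm{\land\mhyphen R}\right)$, $\left(\mathrm{\lor\mhyphen L}\right)$ and $\left(\mathrm{\to\mhyphen L}\right)$ carry the real content, and I expect the bookkeeping here to be the main obstacle. Now each premise is good and supplies a witness, but the two witnesses may index different components. If either witness lies in the shared side hypersequent $G$ it survives into the conclusion and we are done. The delicate case is when both witnesses are the respective principal components; then I would fuse the two $\forall\INT$-validities into a single one validating the principal component of the conclusion. For $\left(\mathrm{cut}\right)$ this is nothing but the admissibility of cut in $\forall\INT$: from $\bigwedge\Gamma_{0}\to\left(\bigvee\Delta_{0}\lor\delta\right)$ and $\delta\land\bigwedge\Gamma_{1}\to\bigvee\Delta_{1}$ one obtains $\bigwedge\Gamma_{0}\land\bigwedge\Gamma_{1}\to\left(\bigvee\Delta_{0}\lor\bigvee\Delta_{1}\right)$. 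The other three fuse by a short intuitionistic case split; for instance, $\left(\mathrm{\land\mhyphen R}\right)$ combines $\bigwedge\Gamma\to\bigvee\Delta\lor\varphi_{1}$ and $\bigwedge\Gamma\to\bigvee\Delta\lor\varphi_{2}$ into $\bigwedge\Gamma\to\bigvee\Delta\lor\left(\varphi_{1}\land\varphi_{2}\right)$.

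The conceptual heart of the proof --- and the reason the statement holds for this system yet collapses once $\left(\com\right)$ is present --- is that every rule above either leaves the side hypersequent inert or acts soundly on a single component, so the single-sequent soundness of $\forall\INT$ lifts componentwise. The communication rule is the unique rule that genuinely splices antecedents and succedents across two distinct components: from witnesses for $\Gamma,\Delta\To\Theta$ and $\Gamma',\Delta'\To\Theta'$ it would produce the mixed components $\Gamma,\Delta'\To\Theta$ and $\Gamma',\Delta\To\Theta'$, neither of which need be $\forall\INT$-valid --- only their disjunction is, and that requires $\LIN$. Excluding $\left(\com\right)$ is exactly what keeps the multi-component structure logically inert and confines the calculus to $\forall\INT$.
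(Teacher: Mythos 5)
Your proof is correct, and its inductive skeleton is the same as the paper's: induction on the derivation, a dichotomy on whether the witnessing component lies in the untouched side hypersequent $G$ or is the principal component, and a fusion of the two witnesses in the two-premise rules. The genuine difference is the invariant carried through the induction. You maintain the \emph{semantic} statement that the universal closure of some component's formula translation is $\forall\INT$-valid, and your fusion steps are one-line intuitionistic validities. The paper instead maintains the \emph{syntactic} statement that some single component $S_i$ is provable in Maehara's sequent calculus $\forall\LJ'$: it translates the hypersequent proof figure into an actual $\forall\LJ'$ proof figure of one component, applying the corresponding $\forall\LJ'$ rule (cut, $\left(\mathrm{\forall\mhyphen R}\right)$, etc.) exactly where you fuse validities, and only then appeals to soundness of $\forall\LJ'$ over $\forall\INT$. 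Your route is more self-contained (no second calculus is needed and the theorem as stated follows directly); the paper's route proves something strictly stronger --- an explicit proof translation that, as the paper notes, does not increase proof complexity --- and it is this proof-theoretic form of the result that underwrites the paper's subsequent remark contrasting these systems with those containing $\left(\com\right)$. Your closing observation about why $\left(\com\right)$ breaks componentwise soundness matches that remark. One cosmetic correction: the fusion step for $\left(\mathrm{cut}\right)$ is not ``admissibility of cut in $\forall\INT$'' (no cut elimination is involved); it is simply the $\forall\INT$-validity of the implication you display, i.e., transitivity of implication under a case split.
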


\begin{proof}
We only need to show that $\forall\HLJ'+\left(\mathrm{\forall\mhyphen R_{ms}}\right)\vdash S_{1}\mid\cdots\mid S_{n}$
implies $\forall\LJ'\vdash S_{i}$ for some $i$. Given a proof figure
$\Pi$ of $\forall\HLJ'+\left(\mathrm{\forall\mhyphen R_{ms}}\right)\vdash S_{1}\mid\cdots\mid S_{n}$,
we construct a proof figure $\Pi'$ of $\forall\LJ'\vdash S_{i}$
for some $i$ by induction on the structure of $\Pi$.
\begin{casenv}
\item If $\Pi$ is $\infer[\left(\mathrm{Id}\right)]{\varphi\To\varphi}{}$
or $\infer[\left(\mathrm{Bot}\right)]{\bot\To\varphi}{}$, then it
is a proof figure of $\forall\LJ'$ at the same time.
\item The last inference rule is one of the external structural rules. For
example, if the last rule is the external weakening rule
\[
\infer[\left(\mathrm{ew}\right)]{S_{1}\mid G}{\begin{array}{c}
\ddots\vdots\iddots\\
G
\end{array}}
\]
then we have constructed a proof figure of $\forall\LJ'\vdash S_{i}$
for some $S_{i}\in G$ by the induction hypothesis. The same applies
to external exchange and external contraction.
\item The last inference rule is the cut rule
\[
\infer[\left(\mathrm{cut}\right)]{\Gamma_{0},\Gamma_{1}\To\Delta_{0},\Delta_{1}\mid G}{\begin{array}{c}
\ddots\vdots\iddots\\
\Gamma_{0}\To\Delta_{0},\delta\mid G
\end{array}\amp\begin{array}{c}
\ddots\vdots\iddots\\
\delta,\Gamma_{1}\To\Delta_{1}\mid G
\end{array}}
\]
then one of the following cases holds by the induction hypothesis.
\begin{casenv}
\item There exists a proof figure of $\forall\LJ'\vdash S$ for some $S\in G$
as desired.
\item There exist proof figures $\Sigma_{0}$ and $\Sigma_{1}$ of $\forall\LJ'\vdash\Gamma_{0}\To\Delta_{0},\delta$
and $\forall\LJ'\vdash\delta,\Gamma_{1}\To\Delta_{1}$, respectively.
The desired proof figure of $\forall\LJ'\vdash\Gamma_{0},\Gamma_{1}\To\Delta_{0},\Delta_{1}$
is obtained as follows:
\[
\vcenter{\infer[\left(\mathrm{cut}\right)]{\Gamma_{0},\Gamma_{1}\To\Delta_{0},\Delta_{1}}{\Sigma_{0}\amp\Sigma_{1}}}.
\]
\end{casenv}
\item The last inference rule is one of the internal structural rules, the
logical rules and the quantifier rules. The same argument works well.
For example, if the last rule is the quantifier rule
\[
\infer[\left(\mathrm{\forall\mhyphen R_{ms}}\right)]{\Gamma\To\forall x\varphi\mid G}{\begin{array}{c}
\ddots\vdots\iddots\\
\Gamma\To\varphi\mid G
\end{array}}
\]
then one of the following cases holds by the induction hypothesis.
\begin{casenv}
\item There exists a proof figure of $\forall\LJ'\vdash S$ for some $S\in G$.
\item There exists a proof figure $\Sigma$ of $\forall\LJ'\vdash\Gamma\To\varphi$.
We obtain the desired proof figure of $\forall\LJ'\vdash\Gamma\To\forall x\varphi$:
\[
\vcenter{\infer[\left(\mathrm{\forall\mhyphen R}\right)]{\Gamma\To\forall x\varphi}{\Sigma}}.
\]
\end{casenv}
\end{casenv}
Note that this procedure does not increase the complexity of the proofs
(such as the number of symbols, formulae and steps).

\end{proof}
\begin{rem}
The hypersequent calculi $\HLK$, $\HLJ$, $\HLJ'$ and their predicate
versions have the strong soundness property in the sense of \prettyref{thm:strong-soundness}.
On the other hand, the hypersequent calculi with the communication
rule (or its variations such as the right split rule) does not possess
the strong soundness property. For example, the hypersequent $\varphi\To\psi\mid\psi\To\varphi$
is provable in such a system, but is neither $\varphi\To\psi$ nor
$\psi\To\varphi$.
\end{rem}

\section{\label{sec:Future-work}Future work}

A hypersequent calculus of $\forall\INT+\ACD$ can be obtained by
adding either the multi-component multi-conclusioned $\forall$-right
rule
\[
\vcenter{\infer[\left(\mathrm{\forall\mhyphen R_{mm}}\right)]{\Gamma\To\Delta,\forall x\varphi\mid G}{\Gamma\To\Delta,\varphi\mid G}}
\]
or the single-component multi-conclusioned $\forall$-right rule
\[
\vcenter{\infer[\left(\mathrm{\forall\mhyphen R_{sm}}\right)]{\Gamma\To\Delta,\forall x\varphi}{\Gamma\To\Delta,\varphi}}.
\]
This however makes no progress on proof theory of $\forall\INT+\ACD$
beyond the Gentzen-style proof system. In fact, Maehara's $\forall\LJ'$
with $\left(\mathrm{\forall\mhyphen R_{sm}}\right)$ gives a sequent
calculus for $\forall\INT+\ACD$ (see e.g. \citet{KS94}). The system
$\forall\HLJ'+\left(\mathrm{\forall\mhyphen R_{mm}}\right)+\left(\mathrm{\exists\mhyphen L_{m}}\right)$
is merely a hypersequent version of $\forall\LJ'+\left(\mathrm{\forall\mhyphen R_{sm}}\right)$.
\begin{problem}
Find a (well-behaved) hypersequent calculus for $\forall\INT+\ACD$,
where $\ACD$ is formulated as a \emph{structural rule}. Establish
the cut-elimination theorem and the Craig interpolation theorem for
such a system.
\end{problem}

A hypersequent $\left(\Gamma_{1}\To\Delta_{1}\right)\left(\vec{x}\right)\mid\cdots\mid\left(\Gamma_{n}\To\Delta_{n}\right)\left(\vec{x}\right)$
with free variables $\vec{x}$ can be translated to closed formulae
in two different ways:
\[
\forall\vec{x}\bigvee_{i=1}^{n}\left(\bigwedge\Gamma_{i}\left(\vec{x}\right)\to\bigvee\Delta_{i}\left(\vec{x}\right)\right),\quad\bigvee_{i=1}^{n}\forall\vec{x}\left(\bigwedge\Gamma_{i}\left(\vec{x}\right)\to\bigvee\Delta_{i}\left(\vec{x}\right)\right).
\]
In the first case, the free variables are considered to be \emph{shared}
with all components. In the second case, the free variables are considered
not to be shared. In order to manipulate these two translations explicitly,
one can introduce two kinds of variables, \emph{global variables}
and \emph{local variables}. We immediately observe that the global-to-local
conversion rule
\[
\infer{\To\varphi\mid\To\psi\left(x^{\mathrm{local}}\right)}{\To\varphi\mid\To\psi\left(x^{\mathrm{global}}\right)}
\]
corresponds to $\ACD\colon\forall x\left(\varphi\lor\psi\left(x\right)\right)\to\varphi\lor\forall x\psi\left(x\right)$.
It might be fruitful to investigate the sharing/unsharing rules (see
\prettyref{tab:Share-and-unshare}).
\begin{table}
\centering%
\begin{tabular}{c}
\noalign{\vskip2mm}
$\infer[\left(\mathrm{share}\right)]{\left[x^{\mathrm{global}}/x^{\mathrm{local}}\right]S\mid G}{S\mid G}$\tabularnewline[2mm]
\noalign{\vskip2mm}
where $x^{\mathrm{global}}$ does not freely occur in $S,G$.\tabularnewline[2mm]
\noalign{\vskip2mm}
\tabularnewline[2mm]
\noalign{\vskip2mm}
$\infer[\left(\mathrm{unshare}\right)]{\left[x^{\mathrm{local}}/x^{\mathrm{global}}\right]S\mid G}{S\mid G}$\tabularnewline[2mm]
\noalign{\vskip2mm}
where $x^{\mathrm{global}}$ does not freely occur in $G$ and $x^{\mathrm{local}}$
does not freely occur in $S$.\tabularnewline[2mm]
\end{tabular}\caption{\label{tab:Share-and-unshare}}
\end{table}

\begin{problem}
Develop hypersequent calculi with the distinction of global and local
variables.
\end{problem}

Hirai \citep{Hir12,Hir13} proposed hyper-lambda calculi, models of
concurrent computation. Simply typed hyper-lambda calculi correspond
to various propositional hypersequent calculi. Notably, the asynchronous
hyper-lambda calculus $\lambda\mathrm{\mhyphen GD}$ corresponds to
Avron's system $\HLJ+\left(\com\right)$ of $\GD$. Through the Curry--Howard
correspondence, we can shed light on the computational content of
the linearity axiom $\LIN$. Naturally, it is expected that the computational
content of $\ACD$ can be revealed by considering an appropriate dependently
typed hyper-lambda calculus.
\begin{problem}
Develop a dependently typed hyper-lambda calculus corresponding to
$\forall\INT+\ACD$.
\end{problem}

\subsection*{Author Contributions}

Conceptualisation and Methodology, T.I. (\prettyref{sec:Splitting}
and \ref{sec:Future-work}) and S.M. (\prettyref{sec:Forall-rules});
Investigation and Validation, T.I., S.M. and S.Q.; Writing---Original
Draft, T.I. and S.Q.; Writing---Review \& Editing, T.I., S.M. and
S.Q.; Visualisation, S.Q.; Project Administration, S.Q.

\bibliographystyle{IEEEtranSN}
\bibliography{ref}

\begin{thebibliography}{14}
\providecommand{\natexlab}[1]{#1}
\providecommand{\url}[1]{#1}
\csname url@samestyle\endcsname
\providecommand{\newblock}{\relax}
\providecommand{\bibinfo}[2]{#2}
\providecommand{\BIBentrySTDinterwordspacing}{\spaceskip=0pt\relax}
\providecommand{\BIBentryALTinterwordstretchfactor}{4}
\providecommand{\BIBentryALTinterwordspacing}{\spaceskip=\fontdimen2\font plus
\BIBentryALTinterwordstretchfactor\fontdimen3\font minus
  \fontdimen4\font\relax}
\providecommand{\BIBforeignlanguage}[2]{{%
\expandafter\ifx\csname l@#1\endcsname\relax
\typeout{** WARNING: IEEEtranSN.bst: No hyphenation pattern has been}%
\typeout{** loaded for the language `#1'. Using the pattern for}%
\typeout{** the default language instead.}%
\else
\language=\csname l@#1\endcsname
\fi
#2}}
\providecommand{\BIBdecl}{\relax}
\BIBdecl

\bibitem[Avron(1987)]{Avr87}
A.~Avron, ``A constructive analysis of {RM},'' \emph{The Journal of Symbolic
  Logic}, vol.~52, no.~4, pp. 939--951, 1987.

\bibitem[Avron(1991)]{Avr91}
------, ``Hypersequents, logical consequence and intermediate logics for
  concurrency,'' \emph{Annals of Mathematics and Artificial Intelligence},
  vol.~4, no. 3-4, pp. 225--248, 1991.

\bibitem[Avron(1996)]{Avr96}
------, ``The method of hypersequents in the proof theory of propositional
  non-classical logics,'' in \emph{Logic: {F}rom {F}oundations to
  {A}pplications: {E}uropean {L}ogic {C}olloquium}, W.~Hodges, M.~Hyland,
  C.~Steinhorn, and J.~Truss, Eds.\hskip 1em plus 0.5em minus 0.4em\relax
  Oxford University Press, 1996, pp. 1--32.

\bibitem[Baaz and Zach(2000)]{BZ00}
M.~Baaz and R.~Zach, ``Hypersequents and the {P}roof {T}heory of
  {I}ntuitionistic {F}uzzy {L}ogic,'' in \emph{Computer Science Logic}, ser.
  Lecture Notes in Computer Science, P.~G. Clote and H.~Schwichtenberg, Eds.,
  vol. 1862.\hskip 1em plus 0.5em minus 0.4em\relax Springer, 2000, pp.
  187--201.

\bibitem[Baaz et~al.(2013)Baaz, Lahav, and Zamansky]{BLZ13}
M.~Baaz, O.~Lahav, and A.~Zamansky, ``Finite-valued {S}emantics for {C}anonical
  {L}abelled {C}alculi,'' \emph{Journal of Automated Reasoning}, vol.~51, pp.
  401--430, 2013.

\bibitem[Ciabattoni(2005)]{Cia05}
A.~Ciabattoni, ``A proof-theoretical investigation of global intuitionistic
  (fuzzy) logic,'' \emph{Archive for Mathematical Logic}, vol.~44, no.~4, pp.
  435--457, 2005.

\bibitem[Ciabattoni et~al.(2013)Ciabattoni, Maffezioli, and Spendier]{Cia2013}
A.~Ciabattoni, P.~Maffezioli, and L.~Spendier, ``Hypersequent and labelled
  calculi for intermediate logics,'' in \emph{Automated Reasoning with Analytic
  Tableaux and Related Methods}, ser. Lecture Notes in Computer Science,
  D.~Galmiche and D.~Larchey-Wendling, Eds., vol. 8123.\hskip 1em plus 0.5em
  minus 0.4em\relax Springer, 2013, pp. 81--96.

\bibitem[Ciabattoni et~al.(2014)Ciabattoni, Ramanayake, and Wansing]{CRW2014}
A.~Ciabattoni, R.~Ramanayake, and H.~Wansing, ``Hypersequent and display
  calculi--a unified perspective,'' \emph{Studia Logica}, vol. 102, no.~6, pp.
  1245--1294, 2014.

\bibitem[Diener and McKubre-Jordens(2020)]{DMKJ20}
H.~Diener and M.~McKubre-Jordens, ``Classifying material implications over
  minimal logic,'' \emph{Archive for Mathematical Logic}, vol.~59, pp.
  905--924, 2020.

\bibitem[Hirai(2012)]{Hir12}
Y.~Hirai, ``A {L}ambda {C}alculus for {G}\"odel--{D}ummett {L}ogic {C}apturing
  {W}aitfreedom,'' in \emph{Functional and Logic Programming}, ser. Lecture
  Notes in Computer Science, T.~Schrijvers and P.~Thiemann, Eds., vol.
  7294.\hskip 1em plus 0.5em minus 0.4em\relax Springer, 2012, pp. 151--165.

\bibitem[Hirai(2013)]{Hir13}
------, ``Hyper-lambda calculi,'' Ph.D. dissertation, University of Tokyo,
  2013.

\bibitem[Kashima(2007)]{Kas07}
R.~Kashima, ``\BIBforeignlanguage{Japanese}{On the intermediate predicate logic
  {CD} (in {J}apanese)},'' in \emph{\BIBforeignlanguage{Japanese}{Proof Theory
  of Arithmetic}}, ser. RIMS K\^oky\^uroku, S.~Kuroda, Ed., vol. 1533, 2007,
  pp. 1--8.

\bibitem[Kashima and Shimura(1994)]{KS94}
R.~Kashima and T.~Shimura, ``Cut-{E}limination {T}heorem for the {L}ogic of
  {C}onstant {D}omains,'' \emph{Mathematical Logic Quarterly}, vol.~40, no.~2,
  pp. 153--172, 1994.

\bibitem[Tiu(2011)]{Tiu11}
A.~Tiu, ``A {H}ypersequent {S}ystem for {G}\"odel--{D}ummett {L}ogic with
  {N}on-constant {D}omains,'' in \emph{Automated Reasoning with Analytic
  Tableaux and Related Methods}, ser. Lecture Notes in Computer Science,
  K.~Br\"unnler and G.~Metcalfe, Eds., vol. 6793.\hskip 1em plus 0.5em minus
  0.4em\relax Springer, 2011, pp. 248--262.

\end{thebibliography}

\end{document}